\newtheorem{Theorem}{Theorem}[section]
\newtheorem{Lemma}[Theorem]{Lemma}
\newtheorem{Proposition}[Theorem]{Proposition}
\newtheorem{Remark}[Theorem]{Remark}
\numberwithin{equation}{section}
\def\bb{\begin{equation}} \def\ee{\end{equation}}
\title{Two equivalence theorems of different kinds of optimal control problems for  Schr\"{o}dinger equations}
\author{Yubiao Zhang\thanks{
School of Mathematics and Statistics, Wuhan University, Wuhan,
430072, China. Email: yubiao\b{\;}zhang@whu.edu.cn} }
\begin{document}

\date{}
\maketitle
\begin{abstract} This paper builds up  two  equivalence theorems for different kinds of   optimal control problems of internally controlled  Schr\"{o}dinger equations.
 The first one concerns with the equivalence of the minimal norm  and the  minimal time  control problems. (The minimal time control problems are also called the first type of optimal time control problems.) The targets of the aforementioned two kinds of  problems are the  origin of the state space.
 The second one deals with the equivalence of three optimal control problems which are optimal target control problems, optimal norm control problems and the second type of optimal time control problems. These two theorems were estabilished for heat equations in  \cite{2} and \cite{1} respectively.\\

\noindent\textbf{Key words.} optimal controls, minimal time, minimal norm, optimal distance, optimal time, optimal norm, Schr\"{o}dinger equations\\

\noindent\textbf{AMS Subject Classifications.} 35Q93, 93C20
\end{abstract}

\bigskip
\section{Introduction}

 $\;\;\;\;$ We begin with introducing the minimal time and the minimal norm control problems. Let  $\Omega \subseteq\mathbb{R}^n$
be a bounded domain with a  $C^\infty$ boundary $\partial \Omega$. Let  $\omega\subset \Omega$ be an open and nonempty subset  satisfying the geometric control condition provided in  \cite{3}.
Write $\chi_\omega$ for the characteristic function of $\omega$.
Write $L^2(\Omega)$ for the space $L^2(\Omega; \mathbb{C})$.
Consider the following two Schr\"{o}dinger equations:
\begin{eqnarray}\label{1.4}
 \left\{
  \begin{array}{lll}
   \partial_t y+i\Delta y=\chi_{\omega}u  &\mbox{in}&\Omega\times\mathbb R^+,\\
   y=0  &\mbox{on} &\partial \Omega\times \mathbb R^+,\\
   y(0)=y_0  &\mbox{in} &\Omega
  \end{array}
 \right.
\end{eqnarray}
 and
\begin{eqnarray}\label{1.5}
 \left\{
  \begin{array}{lll}
   \partial_t y+i\Delta y=\chi_{\omega}v  &\mbox{in}&\Omega\times (0,T),\\
   y=0  &\mbox{on} &\partial \Omega\times (0,T),\\
   y(0)=y_0  &\mbox{in} &\Omega.
  \end{array}
 \right.
\end{eqnarray}
Here,  $y_0\in L^2(\Omega)$, $T>0$ and controls $u(\cdot)$ and $v(\cdot)$ are taken from $L^{\infty}(\mathbb R^+;L^2(\Omega))$ and $L^{\infty}((0,T);L^2(\Omega))$ respectively. Denote by $y(\cdot;u)$ and $y(\cdot;v)$ the solutions to (\ref{1.4}) and (\ref{1.5}) respectively. Throughout this paper, write $\langle\cdot,\cdot\rangle$ and $\|\cdot\|$ for the usual inner product and the norm in $L^2(\Omega)$ respectively. We would like to mention that when $\omega$ satisfies the geometric control condition, Schr\"{o}dinger equations have the  $L^\infty-$exact controllability in any time interval by combining Theorem 4.4 in \cite{20} and Proposition 2.1 in \cite{4}.

Given $T >0$ and $ M >0$, we define two admissible control sets:
\begin{description}
 \item ~~~$\mathcal U_M\triangleq\{u\in L^{\infty}(\mathbb R^+;L^2(\Omega));~ \|u(t)\| \leq M~a.e.~ t\in\mathbb R^+ \mbox{ and } \exists s>0~s.t.~y(s;u)=0\}$;
 \item ~~~$\mathcal V_T\triangleq\{v\in L^{\infty}((0,T);L^2(\Omega));~ y(T;v)=0\}$.
\end{description}
Since the  Schr\"{o}dinger equation has the $L^\infty$-exact controllability, the set $ \mathcal V_T$ is non-empty; and meanwhile it follows from
Theorem 3.1 in \cite{6} that the set  $\mathcal U_M$ is non-empty. Now, the minimal time and the minimal norm problems associated with $M$ and $T$ respectively are as follows:
\begin{description}
 \item  ~~~$(TOCP)_M$: $T_M\triangleq\inf\{s;~ u\in\mathcal U_M \mbox{ and } y(s;u)=0\}$;
 \item  ~~~$(NOCP)_T$: $M_T\triangleq\inf\{\|v\|_{L^{\infty}((0,T);L^2(\Omega))};~ v\in\mathcal V_T\}$.
\end{description}
The above two problems have the same target $\{0\}\in L^2(\Omega)$. The numbers $T_M$ and $M_T$ are called accordingly the minimal time and the minimal norm to $(TOCP)_M$ and $(NOCP)_T$. A control
$u^*\in \mathcal U_M$ is called an optimal control to  $(TOCP)_M$ if $y(T_M; u^*)=0$, while a control $v^*\in \mathcal V_T$ is called an optimal control to $(NOCP)_T$ if $\|v^*\|_{L^{\infty}((0,T);L^2(\Omega))}=M_T$. For the existence of the optimal controls to both
 $(TOCP)_M$  and  $(NOCP)_T$, see Theorem 3.3 in \cite{6} and Lemma \ref{Lemma zhang} in section 2.

Next, we will introduce other three different optimal control problems. Fix a $T>0$. Consider the following Schr\"{o}dinger equation:
\begin{eqnarray}\label{1.1}
 \left\{
  \begin{array}{lll}
   \partial_t y+i\Delta y=\chi_{\omega}\chi_{(\tau,T)}u  &\mbox{in}&\Omega\times(0,T),\\
   y=0  &\mbox{on} &\partial \Omega\times (0,T),\\
   y(0)=y_0  &\mbox{in} &\Omega,
  \end{array}
 \right.
\end{eqnarray}
where $y_0\in L^2(\Omega)$, $\tau\in[0,T)$, $u\in L^\infty((0,T);L^2(\Omega))$, and  $\chi_{(\tau,T)}$ is the characteristic function of  $(\tau,T)$. Denote by $y(\cdot;\chi_{(\tau,T)}u,y_0)$ the solution to Equation (\ref{1.1}). Let  $z_d\in L^2(\Omega)$ verify
\begin{equation}\label{wang1.4}
r_T\triangleq\|y(T;0,y_0)-z_d\|>0.
\end{equation}
Define the following target sets:
$$B(z_d,r)\triangleq\{z\in L^2(\Omega); \|z-z_d\| \leq r\},\;\; r\geq 0. $$
When $M\geq 0$, $\tau\in [0,T)$ and $r>0$, we set up three admissible control sets:
\begin{description}
 \item ~~~$\mathcal U_{M,\tau}\triangleq\{u\in L^{\infty}((0,T);L^2(\Omega));~\Vert u(t)\Vert\leq M,~ a.e.~ t\in (\tau,T)\}$;
 \item ~~~$\mathcal V_{M,r}\triangleq\{u;~\exists \tau\in[0,T)~s.t.~u\in \mathcal U_{M,\tau}\mbox{ and }y(T;\chi_{(\tau,T)}u,y_0)\in B(z_d,r) \}$;
 \item ~~~$\mathcal W_{\tau,r}\triangleq\{u\in L^{\infty}((0,T);L^2(\Omega));~ y(T;\chi_{(\tau,T)}u,y_0)\in B(z_d,r) \}$.
\end{description}
Notice that the above sets may be empty. {\it We will focus ourself on the cases where they are not empty}. These cases  correspond  to some parameters $M$, $\tau$ and $r$,  and will be discussed in Section 3. When $u\in\mathcal V_{M,r}$,  we let
\begin{equation*}
 \tau_{M,r}(u)\triangleq\sup\{\tau\in[0,T);~y(T;\chi_{(\tau,T)}u,y_0)\in B(z_d,r)\}.
\end{equation*}
Now, we define optimal target control problems, second type of optimal time control problems and optimal norm control problems as follows:
\begin{description}
 \item ~~~$(OP)^{M,\tau}$: $r(M,\tau)\triangleq\inf\{\|y(T;\chi_{(\tau,T)}u,y_0)-z_d\|;~u\in \mathcal U_{M,\tau} \}$, when $\mathcal U_{M,\tau}\neq\emptyset$;
 \item ~~~$(TP)^{M,r}$: $\tau(M,r)\triangleq\sup\{\tau_{M,r}(u);~u \in \mathcal V_{M,r}\}$, when $\mathcal V_{M,r}\neq\emptyset$;
 \item ~~~$(NP)^{\tau,r}$: $M(\tau,r)\triangleq\inf\{\|\chi_{(\tau,T)}u\|_{L^{\infty}((0,T);L^2(\Omega))};~u\in \mathcal W_{\tau,r}\}$, when $\mathcal W_{\tau,r}\neq\emptyset$.
\end{description}

\noindent The values  $r(M,\tau)$, $\tau(M,r)$ and $M(\tau,r)$ are called the optimal distance, the optimal time and the optimal norm  to $(OP)^{M,\tau}$,  $(TP)^{M,r}$ and $(NP)^{\tau,r}$ respectively. Furthermore, a control $u^\ast$$\in \mathcal U_{M,\tau}$, satisfying $\|y(T;\chi_{(\tau,T)}u^\ast,y_0)-z_d\|=r(M,\tau)$, is called an optimal control to $(OP)^{M,\tau}$; a control $u^\ast$$\in \mathcal V_{M,r}$, satisfying $y(T;\chi_{(\tau(M,r),T)}u^*,y_0)\in B(z_d,r)$, is called an optimal control to $(TP)^{M,r}$; and a control $u^\ast$$\in \mathcal W_{\tau,r}$, satisfying $\|\chi_{(\tau,T)}u^*\|_{L^{\infty}((0,T);L^2(\Omega))} = M(\tau,r)$, is called an optimal control to $(NP)^{\tau,r}$.

In the studies of the equivalence of these three optimal control problems, we need that the  optimal distance  $r(M,\tau)$ is positive. To ensure it, we need impose that $M\in [0, M^\tau)$.
Here  $M^\tau$ is a function of $\tau$ defined by
\begin{equation}\label{bb}
 M^\tau=\inf\{\|\chi_{(\tau,T)}u\|_{L^{\infty}((0,T);L^2(\Omega))} ;~ u\in \mathcal{S}^\tau\} \mbox{ for all } \tau\in[0, T),
\end{equation}
where
$$
\mathcal{S}^\tau=\{u\in L^{\infty}((0,T);L^2(\Omega)),~y(T;\chi_{(\tau,T)}u,y_0)=z_d\}.
$$
By the $L^\infty$-exact controllability of Schr\"{o}dinger equations, each $\mathcal{S}^\tau$, with $\tau\in [0, T)$, is non-empty.

The first main result of this paper is stated as follows.

\begin{Theorem}\label{Theorem 1.2}
 Let $y_0\neq 0$, $M > 0$ and $T > 0$. The optimal control to $(TOCP)_M$ when  restricted in $(0,T_M)$ is an optimal control to $(NOCP)_{T_M}$; while the optimal control to $(NOCP)_T$ when extended by zero outside $(0,T)$ is an optimal control to $(TOCP)_{M_T}$.
\end{Theorem}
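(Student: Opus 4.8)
The plan is to reduce both halves of the statement to two \emph{inverse-function} identities for the value functions, namely $M_{T_M}=M$ and $T_{M_T}=T$, and then to deduce these identities from the strict monotonicity of the two value maps $M\mapsto T_M$ and $T\mapsto M_T$, which in turn I would extract from the bang-bang property of the optimal controls.

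First I would record the admissibility bookkeeping together with the two easy inequalities. If $u^\ast$ is optimal for $(TOCP)_M$, then $y(T_M;u^\ast)$ depends only on $u^\ast|_{(0,T_M)}$, so this restriction lies in $\mathcal V_{T_M}$ and has $L^\infty((0,T_M);L^2(\Omega))$-norm at most $M$; hence $M_{T_M}\le M$. Symmetrically, the extension by zero $\tilde v^\ast$ of an optimal control $v^\ast$ for $(NOCP)_T$ satisfies $\|\tilde v^\ast(t)\|\le M_T$ a.e. and $y(T;\tilde v^\ast)=0$, so $\tilde v^\ast\in\mathcal U_{M_T}$ and $T_{M_T}\le T$. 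Granting the two identities, the first chain gives $M=M_{T_M}\le\|u^\ast|_{(0,T_M)}\|\le M$, i.e. $u^\ast|_{(0,T_M)}$ is optimal for $(NOCP)_{T_M}$, while the second gives $y(T_{M_T};\tilde v^\ast)=y(T;\tilde v^\ast)=0$, i.e. $\tilde v^\ast$ is optimal for $(TOCP)_{M_T}$.

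Next I would prove that $M\mapsto T_M$ and $T\mapsto M_T$ are \emph{strictly} decreasing; that they are non-increasing is clear, the former from $\mathcal U_{M'}\subseteq\mathcal U_M$ when $M'<M$, the latter by extension by zero. For $T\mapsto M_T$: if $M_{T_2}=M_{T_1}$ for some $T_1<T_2$, the extension by zero of an optimal control for $(NOCP)_{T_1}$ is optimal for $(NOCP)_{T_2}$ yet vanishes on $(T_1,T_2)$, contradicting the bang-bang identity $\|v(t)\|=M_{T_2}$ a.e.; here $M_{T_2}>0$ because the free Schr\"odinger flow conserves the $L^2(\Omega)$-norm and $y_0\neq0$. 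For $M\mapsto T_M$: if $T_{M'}=T_M$ with $M'<M$, a time-optimal control for $(TOCP)_{M'}$ reaches the origin at the minimal time $T_M$ with norm at most $M'<M$, hence is itself time-optimal for $(TOCP)_M$, contradicting the bang-bang identity $\|u(t)\|=M$ a.e. Strictness then yields the two identities at once: if $M_{T_M}<M$ then the optimal control of $(NOCP)_{T_M}$ gives $T_{M_{T_M}}\le T_M$, whereas strict monotonicity forces $T_{M_{T_M}}>T_M$; and if $T_{M_T}<T$ then the optimal control of $(TOCP)_{M_T}$ gives $M_{T_{M_T}}\le M_T$, whereas strict monotonicity forces $M_{T_{M_T}}>M_T$.

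The crux---and the step I expect to be the main obstacle---is the bang-bang property invoked above: every optimal control of $(TOCP)_M$ (resp.\ $(NOCP)_T$) has $L^2(\Omega)$-norm equal to $M$ (resp.\ $M_T$) for a.e.\ $t$. I would derive it from a Pontryagin-type maximum principle, representing an optimal control as $u^\ast(t)=M\,\varphi(t)/\|\varphi(t)\|$ for a nontrivial solution $\varphi$ of the adjoint Schr\"odinger equation, and then invoking unique continuation for that adjoint equation---available precisely because $\omega$ satisfies the geometric control condition---to guarantee $\varphi(t)\neq0$ for a.e.\ $t$. Setting up this maximum principle rigorously and, above all, securing the unique continuation/observability input for the Schr\"odinger adjoint system is the delicate part; once it is in place, the monotonicity arguments and the reductions above are routine.
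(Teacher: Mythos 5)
Your proposal is correct, and it takes a genuinely different route to the two key identities $T_{M_T}=T$ and $M_{T_M}=M$ (equations (\ref{2.4})--(\ref{2.5}) of the paper). The outer layers coincide with the paper's: you reduce Theorem \ref{Theorem 1.2} to these identities via the easy restriction/extension inequalities $M_{T_M}\le M$ and $T_{M_T}\le T$, exactly as in the paper's proof of the theorem. The difference is in the middle. The paper (Lemma \ref{Lemma 1.1}) proves strict monotonicity of $M\mapsto T_M$ only (from the bang-bang property of $(TOCP)_M$, quoted from Proposition 4.4 in \cite{4}), and then invests most of its effort in the continuity of $T_M$ --- the delicate step being left-continuity, established by a gluing construction that uses the quantitative $L^\infty$-exact controllability with control bound $C(T_M,\delta)\|y_0\|$ --- together with $\lim_{M\to 0^+}T_M=\infty$; the identity $T_{M_T}=T$ then follows from an intermediate-value argument (if $T_{M_T}<T$, pick $M_1<M_T$ with $T_{M_1}=T$ and contradict the optimality of $M_T$), and $M_{T_M}=M$ from injectivity. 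You dispense with continuity and the limits entirely by making the argument symmetric: strict monotonicity of \emph{both} maps plus the crossed inequalities. The price is an ingredient the paper never needs and never establishes: the bang-bang property of the norm-optimal problem $(NOCP)_T$, which is what gives you strict monotonicity of $T\mapsto M_T$. That property is true here and provable essentially as you sketch --- via the dual variational characterization of the minimal-norm control (coercivity coming from $L^1$-in-time observability, which is dual to the $L^\infty$-exact controllability the paper already invokes), the resulting representation $v^*(t)=M_T\,\chi_\omega\varphi^*(t)/\|\chi_\omega\varphi^*(t)\|$ (note it is $\chi_\omega\varphi^*$, not $\varphi^*$, that enters), and the unique continuation fact $\chi_\omega\varphi^*(t)\neq 0$ for a.e.\ $t$ for nonzero adjoint solutions (Lemmas 4.1 and 4.3 in \cite{4}, used by the paper in the proof of Lemma \ref{Lemma 3.3}) --- but it must be proved \emph{directly} in this way: in the heat-equation literature (e.g. \cite{2}) the bang-bang property of norm-optimal controls is typically derived as a corollary of the very time/norm equivalence you are proving, so quoting it off the shelf would be circular. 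In exchange, your route yields that bang-bang property as a by-product, while the paper's route yields continuity of $T_M$ (hence of $M_T$), which the paper reuses later (Lemma \ref{Lemma Mtau}, via $M^\tau=M_{T-\tau}$); so each approach's extra work pays off in its own context.
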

 Theorem~\ref{Theorem 1.2} builds up an equivalence between the minimal time and the minimal norm control problems. Such an equivalence for heat equations has been established  in  \cite{2}. From \cite{2}, it is sufficient to hold the equivalence when one has $(i)$ the $L^\infty$-null controllability in any small interval and with the control bounded by $C(T,\Omega,\omega)\|y_0\|$ (where $C(T,\Omega,\omega)$ is a positive constant depending only on $T$, $\Omega$ and $\omega$); $(ii)$ the existence of optimal controls to minimal time and norm control problems; and $(iii)$ the bang-bang property of the minimal time control problems. Fortunately, these conditions hold  for Schr\"{o}dinger equations. In fact, the condition $(i)$ follows from Theorem 4.4 in \cite{20} and Proposition 2.1 in \cite{4} ; the condition $(ii)$ follows from $(i)$, Theorem 3.3 in \cite{6} and Lemma \ref{Lemma zhang} in section 2;   and $(iii)$ follows from Proposition 4.4 in \cite{4}.
 Thus we can prove Theorem~\ref{Theorem 1.2} by a very similar way as that in the proof of Theorem 1.1 in  \cite{2}. \vskip 10pt

The second main result of this paper is as follows:
\begin{Theorem}\label{Theorem 1.3}
If  $r\in(0,r_T)$ where $r_T$ is given by (\ref{wang1.4}), and $(\tau, M)\in [0,T)\times (0,M^\tau)$  where $M^\tau$ is given by (\ref{bb}), then
 \begin{description}
  \item $(i)$  the problems $(OP)^{M,\tau}$, $(TP)^{M,r(M,\tau)}$ and $(NP)^{\tau,r(M,\tau)}$ have the same optimal control;
  \item $(ii)$  the problems $(NP)^{\tau,r}$, $(OP)^{M(\tau,r),\tau}$ and $(TP)^{M(\tau,r),r}$ have the same optimal control.
 \end{description}
If  $M >0$ and $r\in [r(M,0),r_T)\cap(0,r_T)$, then it  holds that
 \begin{description}
  \item $(iii)$  the problems $(TP)^{M,r}$, $(NP)^{\tau{(M,r)},r}$ and $(OP)^{M,\tau{(M,r)}}$ have the same optimal control.
 \end{description}
\end{Theorem}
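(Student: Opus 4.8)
The plan is to reduce all three equivalences to a single statement about a common ``efficient frontier'' in the parameter space $(M,\tau,r)$, and to deduce that solving any one of the three problems pins down the same point on this frontier, hence the same (unique) optimal control. The structural facts I would draw on, all to be collected in Section~3, are: (P1) existence of an optimal control for each of $(OP)^{M,\tau}$, $(TP)^{M,r}$, $(NP)^{\tau,r}$ whenever its admissible set is nonempty, together with the attainment of the supremum defining $\tau(M,r)$; (P2) the bang-bang property, i.e.\ the optimal control of $(OP)^{M,\tau}$ satisfies $\|u^\ast(t)\|=M$ for a.e.\ $t\in(\tau,T)$, while the optimal controls of the other two problems saturate their defining constraints (in particular the $(NP)^{\tau,r}$ optimum reaches $\partial B(z_d,r)$ with norm identically $M(\tau,r)$); (P3) uniqueness of the optimal control of each problem, a consequence of (P2) and the unique continuation property of the adjoint Schr\"odinger equation; and (P4) continuity together with the strict monotonicities, $M\mapsto r(M,\tau)$ strictly decreasing on $[0,M^\tau)$ (with $r(0,\tau)=r_T$ and $r(M,\tau)\to 0$ as $M\uparrow M^\tau$) and $\tau\mapsto r(M,\tau)$ strictly increasing on $[0,T)$. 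Property (P4) is itself a corollary of (P1)--(P3): if $r(M,\tau')=r(M,\tau)$ for some $\tau'>\tau$, then the optimal control of $(OP)^{M,\tau'}$, being supported in $(\tau',T)\subset(\tau,T)$ with norm $\le M$, is admissible and optimal for $(OP)^{M,\tau}$ too, yet it vanishes on the positive-measure set $(\tau,\tau')$, contradicting the bang-bang equality $\|u^\ast\|=M$ on $(\tau,T)$; the strict decrease in $M$ follows the same way.

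First I would prove (i). Let $u^\ast$ be the optimal control of $(OP)^{M,\tau}$ and set $r:=r(M,\tau)$, which lies in $(0,r_T)$ since $M\in(0,M^\tau)$ by (P4). By (P2), $\|u^\ast(t)\|=M$ a.e.\ on $(\tau,T)$ and $y(T;\chi_{(\tau,T)}u^\ast,y_0)\in\partial B(z_d,r)$. To see that $u^\ast$ solves $(NP)^{\tau,r}$: it belongs to $\mathcal W_{\tau,r}$ with $\|\chi_{(\tau,T)}u^\ast\|_{L^\infty}=M$, so $M(\tau,r)\le M$; moreover $M(\tau,r)<M$ is impossible, for a control of norm $M'<M$ reaching $B(z_d,r)$ would give $r(M',\tau)\le r=r(M,\tau)$, against the strict decrease in $M$. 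Hence $M(\tau,r)=M$ and, by (P3), $u^\ast$ is \emph{the} optimal control of $(NP)^{\tau,r}$. To see that $u^\ast$ solves $(TP)^{M,r}$: taking $\tau'=\tau$ shows $u^\ast\in\mathcal V_{M,r}$ and $\tau_{M,r}(u^\ast)\ge\tau$, so $\tau(M,r)\ge\tau$; and $\tau(M,r)>\tau$ is impossible, since a control reaching $B(z_d,r)$ while effective only on some $(\tau',T)$ with $\tau'>\tau$ and norm $\le M$ would yield $r(M,\tau')\le r=r(M,\tau)$, against the strict increase in $\tau$. Hence $\tau(M,r)=\tau=\tau_{M,r}(u^\ast)$ and, by (P3), $u^\ast$ is the optimal control of $(TP)^{M,r}$. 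This proves (i).

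The remaining parts reduce to (i) by running the frontier backwards. For (ii), let $\bar u$ be the optimal control of $(NP)^{\tau,r}$ and put $M:=M(\tau,r)$. By (P2), $\|\chi_{(\tau,T)}\bar u\|_{L^\infty}=M$ and $\bar u$ reaches $\partial B(z_d,r)$, so $\bar u\in\mathcal U_{M,\tau}$ and $r(M,\tau)\le r$; the strict decrease in $M$ forbids $r(M,\tau)<r$ (otherwise a norm smaller than $M(\tau,r)$ would already reach $B(z_d,r)$), whence $r(M,\tau)=r$ and $\bar u$ is optimal for $(OP)^{M,\tau}$. Applying (i) with this $M$ gives that $(OP)^{M,\tau}$, $(TP)^{M,r}$, $(NP)^{\tau,r}$ share $\bar u$, which is exactly the assertion of (ii). For (iii), the hypothesis $r\ge r(M,0)$ makes $\mathcal V_{M,r}$ nonempty, so by (P1) there is an optimal control $\hat u$ of $(TP)^{M,r}$; set $\tau:=\tau(M,r)$. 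The attained supremum gives $\hat u\in\mathcal U_{M,\tau}$ with $y(T;\chi_{(\tau,T)}\hat u,y_0)\in B(z_d,r)$, so $r(M,\tau)\le r$; and $r(M,\tau)<r$ would, by continuity and strict monotonicity of $\tau\mapsto r(M,\tau)$, permit a starting time slightly larger than $\tau$ still reaching $B(z_d,r)$, contradicting the maximality of $\tau(M,r)$. Thus $r(M,\tau)=r$, $\hat u$ is optimal for $(OP)^{M,\tau}$, and (i) again yields the common optimal control.

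The main obstacle is (P2)--(P4), and in particular the bang-bang property of $(OP)^{M,\tau}$ and the attainment of the supremum defining $\tau(M,r)$; these are precisely what turn the soft inequalities ``no worse'' into the strict inequalities that force the three parameters to coincide. Both rest on the unique continuation property for the adjoint Schr\"odinger equation (equivalently the $L^\infty$-exact controllability cited in the Introduction), and some care is needed at the endpoints of the parameter ranges and in checking that $\tau_{M,r}(u)$ behaves well under the restriction $\chi_{(\tau,T)}u$.
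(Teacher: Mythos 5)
Your architecture is essentially the paper's --- bang-bang property and uniqueness for $(OP)^{M,\tau}$ (the paper's Lemma \ref{Lemma 3.3}), plus the inverse identities $M=M(\tau,r(M,\tau))$, $r=r(M(\tau,r),\tau)$, $M=M(\tau(M,r),r)$, $\tau=\tau(M(\tau,r),r)$ of (\ref{3.13})--(\ref{3.16}), which you rederive inside (i)--(iii) --- with one real difference: you run the time problem through the map $\tau\mapsto r(M,\tau)$ instead of the paper's map $\tau\mapsto M(\tau,r)$ (Lemma \ref{Lemma 3.8}), which, if completed, would bypass the paper's most delicate step (the right-continuity of $M(\cdot,r)$). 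However, the proposal has a genuine gap exactly where the paper does its real work: continuity. You assert in (P4) that $r$ is continuous in $M$ and in $\tau$, and claim (P4) ``is itself a corollary of (P1)--(P3)''; the bang-bang argument you sketch yields only the strict monotonicities. Continuity is not cosmetic here: your proof of (ii) needs some $M'<M(\tau,r)$ with $r(M',\tau)\leq r$, and your proof of (iii) needs some $\tau''>\tau(M,r)$ with $r(M,\tau'')\leq r$, whereas strict monotonicity alone pushes $r$ the wrong way in both cases (it gives $r(M',\tau)>r(M,\tau)$ and $r(M,\tau'')>r(M,\tau)$, which may well exceed $r$). The paper proves the $M$-continuity as a Lipschitz estimate (Lemma \ref{Lemma 3.5}, Step 2); the $\tau$-continuity your route requires appears nowhere in the paper and is not proven by you. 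Both are fillable without continuity language --- in (ii) scale the optimal control of $(OP)^{M,\tau}$ by $\lambda<1$ near $1$, in (iii) restrict it to $(\tau'',T)$ with $\tau''$ near $\tau$, using that $y(T;\cdot)$ is affine and continuous in the control --- but some such argument must actually be supplied.

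Two further points. When you invoke part (i) inside parts (ii) and (iii), you must check that the new parameters are admissible, namely $M(\tau,r)<M^\tau$ in (ii) and $M<M^{\tau(M,r)}$ in (iii); otherwise the positivity of the optimal distance and the bang-bang property on which (i) rests are unavailable. These are precisely the paper's Lemma \ref{Lemma 3.5} (Step 4) and Remark \ref{Rk 3.9}$(ii)$; they do follow from $r>0$ together with attainment of the infimum defining $M^\tau$ (Lemma \ref{Np} with $r=0$), but your proposal skips them silently. Finally, uniqueness (P3) for $(NP)^{\tau,r}$ and $(TP)^{M,r}$ cannot be taken as a standing hypothesis: in the paper it is a corollary of the equivalence itself (Propositions \ref{OPNP}$(iii)$ and \ref{TPNP}$(ii)$). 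Your appeals to (P3) in part (i) are harmless only after reorganizing the logic --- first uniqueness for $(OP)$, then the two-way implications ($(OP)$-optimal $\Rightarrow$ $(NP)$-optimal and conversely, etc.), and only then uniqueness for the other two problems; as stated, (P2)--(P3) for $(NP)$ and $(TP)$ presuppose part of what is being proved.
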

Theorem~\ref{Theorem 1.3} establishes an equivalence among three optimal control problems $(OP)^{M,\tau}$,  $(TP)^{M,r}$ and $(NP)^{\tau,r}$. A similar equivalence for heat equations has been built up in  \cite{1}. From \cite{1}, we see that the condition that the optimal distance is positive plays a key role to derive the equivalence. To guarantee this condition, the authors in  \cite{1}
      imposed the assumption that the target $z_d$ is not in the attainable set of the controlled system. This assumption is natural for heat equations since the heat flow has the smooth effective property.  However, one cannot impose this assumption on Schr\"{o}dinger equations since Schr\"{o}dinger equations  have the $L^\infty$-exact controllability. We pass this barrier by some properties of  the function $M^\tau$, $\tau\in [0,T)$ (see subsection 3.2).

About works on the exact controllability for Schr\"{o}dinger equations, we would like quote the papers \cite{20,15,14,19,8,16} and the references therein. For time optimal control problems, it deserves to mention the papers  \cite{9,4,11,18,12,13,7,6,10,17,5} and the reference therein.

The rest of the paper is organized as follows: Section 2 presents  the proof of Theorem \ref{Theorem 1.2} and some properties of the function $M^\tau$. Section 3 provides some properties of  three optimal control problems $(OP)^{M,\tau}$,  $(TP)^{M,r}$ and $(NP)^{\tau,r}$ and proves Theorem \ref{Theorem 1.3}.


\bigskip
\section{Equivalence of $(TOCP)_M$ and $(NOCP)_T$}

  $~~~\;$ First of all, it follows from Theorem 3.3 in \cite{6} that
  the problem $(TOCP)_{M}$, with $M>0$, has optimal controls. And according to Proposition 4.4 in \cite{4}, we know $(TOCP)_{M}$ holds bang-bang property and has the unique optimal control. The existence of optimal controls to $(NOCP)_T$ is stated as follows:
\begin{Lemma}\label{Lemma zhang}
 Let $T>0$. Then the problem $(NOCP)_T$ has optimal controls.
\end{Lemma}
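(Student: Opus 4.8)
The plan is to prove existence of an optimal control to $(NOCP)_T$ by the direct method in the calculus of variations. Recall that $(NOCP)_T$ asks for $M_T = \inf\{\|v\|_{L^\infty((0,T);L^2(\Omega))}; v\in\mathcal V_T\}$, where $\mathcal V_T$ consists of controls steering $y_0$ to $0$ at time $T$. Let me lay out the argument.

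Let me check the direct method carefully here. Take a minimizing sequence $v_k\in\mathcal V_T$, so $y(T;v_k)=0$ and $\|v_k\|_{L^\infty((0,T);L^2(\Omega))}\to M_T$. The norms are eventually bounded, say by $M_T+1$. Now $L^\infty((0,T);L^2(\Omega))$ is the dual of $L^1((0,T);L^2(\Omega))$ (since $L^2(\Omega)$ is reflexive and separable), so bounded sets are weak-star sequentially compact. Extract a subsequence $v_k\rightharpoonup v^*$ weak-star. The candidate optimal control is $v^*$.

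I need two things. First, weak-star lower semicontinuity of the norm gives $\|v^*\|_{L^\infty((0,T);L^2(\Omega))}\le\liminf_k\|v_k\|_{L^\infty((0,T);L^2(\Omega))}=M_T$. Second, I must verify $v^*\in\mathcal V_T$, i.e.\ $y(T;v^*)=0$. For this I use that the solution map $v\mapsto y(T;v)$ is affine and continuous in the appropriate topology: writing $y(T;v)=y(T;0)+\Phi(v)$ where $\Phi$ is the linear input-to-state map sending $v$ to $\int_0^T e^{-i(T-s)\Delta}\chi_\omega v(s)\,ds$ (via Duhamel), I want $\Phi$ to be weak-star-to-weak continuous. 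Testing against any $\psi\in L^2(\Omega)$, the map $v\mapsto\langle\Phi(v),\psi\rangle=\int_0^T\langle v(s),\chi_\omega e^{i(T-s)\Delta}\psi\rangle\,ds$ is integration against a fixed $L^1((0,T);L^2(\Omega))$ function (the adjoint trajectory $s\mapsto\chi_\omega e^{i(T-s)\Delta}\psi$, which lies in $C([0,T];L^2(\Omega))\subset L^1$), so by definition of weak-star convergence $\langle\Phi(v_k),\psi\rangle\to\langle\Phi(v^*),\psi\rangle$. Hence $y(T;v_k)\rightharpoonup y(T;v^*)$ weakly in $L^2(\Omega)$; since $y(T;v_k)=0$ for all $k$, the weak limit is $0$, so $y(T;v^*)=0$ and $v^*\in\mathcal V_T$. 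Combining with the lower semicontinuity and the fact that $M_T$ is the infimum forces $\|v^*\|_{L^\infty((0,T);L^2(\Omega))}=M_T$, so $v^*$ is optimal.

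The main obstacle, and the point deserving the most care, is the continuity of the final-state constraint under merely weak-star convergence of controls: one cannot pass to the limit in a strong topology, so the argument must route through the weak topology on the state $y(T;\cdot)$ and exploit that the observation functional pairs $v_k$ against a \emph{fixed} $L^1$-in-time test function built from the (backward) Schr\"odinger semigroup. The unitarity and strong continuity of $e^{-it\Delta}$ on $L^2(\Omega)$ guarantee this adjoint trajectory is continuous in time, hence integrable, which is exactly what legitimizes the weak-star passage to the limit; I would state this regularity of the semigroup explicitly as the crux of the proof.
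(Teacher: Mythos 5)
Your proof is correct and follows essentially the same route as the paper's: a minimizing sequence, weak-star compactness of bounded sets in $L^\infty((0,T);L^2(\Omega))$, weak-star lower semicontinuity of the norm, and passage to the limit in the terminal constraint $y(T;\cdot)=0$ via weak convergence of the final states. The only difference is that you spell out why the input-to-state map is weak-star-to-weak continuous (pairing against the $L^1$-in-time adjoint trajectory), a step the paper asserts without detail.
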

\begin{proof}
According to the definition of $M_T$, there is a sequence $\{u_n\}\subset\mathcal V_T$ such that
\begin{equation*}
 y(T;u_n)=0 \mbox{ and } \|u_n\|_{L^\infty(\mathbb R^+;L^2(\Omega))} \rightarrow M_T.
\end{equation*}
 Since $\{u_n\}$ is bounded, we can find a subsequence $\{u_{n_k}\}\subset\{u_n\}$ and $\tilde u\in L^\infty(\mathbb R^+;L^2(\Omega))$ such that
 \begin{equation*}
  u_{n_k} \rightarrow \tilde u \mbox{ weakly star in } L^\infty(\mathbb R^+;L^2(\Omega)).
 \end{equation*}
 This implies that
 \begin{equation*}
  \|\tilde u\|_{L^\infty(\mathbb R^+;L^2(\Omega))} \leq\displaystyle\liminf_{k\rightarrow\infty} \|u_{n_k}\|_{L^\infty(\mathbb R^+;L^2(\Omega))}=M_T
 \end{equation*}
 and
 \begin{equation*}
  y(T;u_{n_k}) \rightarrow y(T;\tilde u) \mbox{ weakly star in } L^2(\Omega).
 \end{equation*}
 Hence $y(T;\tilde u)=0$. From the above, we know $\tilde u$ is an optimal control to $(NOCP)_T$.
\end{proof}

For simplicity, we write the maps $M\rightarrow T_M$ and $T\rightarrow M_T$  as  $T_M$ and $M_T$ respectively.
\begin{Lemma}\label {Lemma 1.1}
 Let $y_0\neq 0$. Then the  map $T_M:\mathbb R^+ \longrightarrow \mathbb R^+$ is strictly monotonically decreasing and continuous. Besides, it holds that $\displaystyle\lim_{M\rightarrow 0^+}T_M=\infty~ and~ \displaystyle\lim_{M\rightarrow \infty}T_M=0$. Furthermore, it stands that
 \begin{align}
  T &=T_{M_T}\hspace{1em} ~for~ each~ T\in\mathbb R^+, \label{2.4}\\
  M &=M_{T_M}\hspace{1em} for ~each~ M\in\mathbb R^+. \label{2.5}
 \end{align}
 Consequently, the inverse of the map   $T_M$ is the map  $M_T$.
\end{Lemma}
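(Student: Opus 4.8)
The plan is to establish, in order, monotonicity, the two limits, the inverse relation \eqref{2.5}, and finally continuity; the relation \eqref{2.4} and the statement about inverses will then fall out formally. First I would prove monotonicity. If $0<M_1<M_2$ then $\mathcal U_{M_1}\subseteq\mathcal U_{M_2}$, so passing to the infimum gives $T_{M_2}\le T_{M_1}$. For strictness, suppose $T_{M_1}=T_{M_2}=:t$. The unique optimal control $u_1^\ast$ of $(TOCP)_{M_1}$ satisfies $\|u_1^\ast(s)\|\le M_1<M_2$ and $y(t;u_1^\ast)=0$ with $t$ equal to the minimal time $T_{M_2}$; hence $u_1^\ast$ is also an optimal control of $(TOCP)_{M_2}$. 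By uniqueness it coincides with the optimal control of $(TOCP)_{M_2}$, which by the bang-bang property has $\|u_1^\ast(s)\|=M_2$ a.e. on $(0,t)$, contradicting $\|u_1^\ast(s)\|\le M_1<M_2$. Thus $T_{M_2}<T_{M_1}$.

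For the limits, I would use that the Schr\"odinger flow is unitary, so $\|y(t;u)\|\ge\|y_0\|-\int_0^t\|u(s)\|\,ds\ge\|y_0\|-Mt$ for any $u\in\mathcal U_M$; reaching $0$ forces $t\ge\|y_0\|/M$, whence $T_M\ge\|y_0\|/M\to\infty$ as $M\to0^+$ (here $y_0\neq0$ is essential). For $M\to\infty$ I would fix a small $t_0>0$ and invoke the $L^\infty$-null controllability on $(0,t_0)$ with a control bounded by $C(t_0,\Omega,\omega)\|y_0\|$; extended by zero this control lies in $\mathcal U_M$ for every $M\ge C(t_0,\Omega,\omega)\|y_0\|$ and drives $y_0$ to $0$ at $t_0$, so $T_M\le t_0$ for all such $M$, and since $t_0$ is arbitrary $T_M\to0$. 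Next I would prove $M_{T_M}=M$, i.e.\ \eqref{2.5}: restricting the optimal control of $(TOCP)_M$ to $(0,T_M)$ gives an admissible control for $(NOCP)_{T_M}$ of norm $\le M$, so $M_{T_M}\le M$; conversely, if $M_{T_M}<M$, an optimal control of $(NOCP)_{T_M}$ (which exists by Lemma \ref{Lemma zhang}), extended by zero, would lie in $\mathcal U_M$ and reach $0$ at the minimal time $T_M$, hence be the optimal control of $(TOCP)_M$, and uniqueness together with the bang-bang property would force its norm to equal $M$ a.e., a contradiction.

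The main obstacle is continuity, which I would split into two one-sided statements using the monotonicity already in hand. For right-continuity I would take $M_n\downarrow M_0$, set $\alpha:=\lim T_{M_n}\le T_{M_0}$, extract a weak-$\ast$ limit $u$ of the optimal controls $u_n$ (whose norms $\le M_n$ are bounded), note $\|u(t)\|\le M_0$ by weak-$\ast$ lower semicontinuity, and pass to the limit in $y(T_{M_n};u_n)=0$ using the weak-$\ast$-to-weak continuity of $u\mapsto y(\cdot;u)$ together with the equicontinuity in time of the states, obtaining $y(\alpha;u)=0$; this gives $T_{M_0}\le\alpha$ and hence equality. The genuinely delicate direction is left-continuity: assuming a jump $\lim_{M\uparrow M_0}T_M>T_{M_0}$, I would scale the bang-bang optimal control $u_0$ of $(TOCP)_{M_0}$ by $\lambda\in(0,1)$ on $(0,T_{M_0})$; by linearity the resulting state at $T_{M_0}$ is $(1-\lambda)y(T_{M_0};0)$, of norm $(1-\lambda)\|y_0\|$, which I would then steer to $0$ on a short interval $(T_{M_0},T_{M_0}+\varepsilon)$ by small-time controllability with a control of norm $\le C(\varepsilon)(1-\lambda)\|y_0\|$. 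Choosing $\varepsilon$ small so that $T_{M_0}+\varepsilon$ still lies below the jump, and then $\lambda$ close to $1$ so that both $\lambda M_0$ and $C(\varepsilon)(1-\lambda)\|y_0\|$ stay below some $M'<M_0$, produces a control of norm $\le M'$ reaching $0$ at $T_{M_0}+\varepsilon$, forcing $T_{M'}\le T_{M_0}+\varepsilon<\lim_{M\uparrow M_0}T_M\le T_{M'}$, a contradiction.

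Finally, with continuity, strict monotonicity and the two limits, $T_M$ is a continuous strictly decreasing bijection of $\mathbb R^+$ onto $\mathbb R^+$. Combined with $M_{T_M}=M$ this identifies the map $T\mapsto M_T$ as the two-sided inverse of $M\mapsto T_M$: since $M\mapsto T_M$ is surjective, every $T\in\mathbb R^+$ equals $T_{M}$ for a unique $M$, and \eqref{2.5} then reads $M_T=M$, so that $T_{M_T}=T_M=T$, which is \eqref{2.4}. I expect the left-continuity argument to be the technical heart of the proof, since it is the only place where one must genuinely trade the extra time against a strictly smaller control norm, and it is there that small-time controllability with controlled cost is indispensable.
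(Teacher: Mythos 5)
Your proposal is correct, and its overall architecture is the same as the paper's: strict monotonicity via the bang-bang property of $(TOCP)_M$, right-continuity by a weak-star compactness argument on the optimal controls, left-continuity by scaling the optimal control of $(TOCP)_{M_0}$ and absorbing the residual state (of norm $(1-\lambda)\|y_0\|$) with a small-time $L^\infty$-null control of cost $C(\varepsilon)(1-\lambda)\|y_0\|$, and the large-$M$ limit by null controllability with bounded cost. Two local steps genuinely differ, both legitimately. First, for $\lim_{M\rightarrow 0^+}T_M=\infty$ the paper argues by contradiction with another weak-star compactness extraction (optimal controls with $M_n\rightarrow 0$ converge to $0$, forcing $y(T_0;0)=0$), whereas you use unitarity of the Schr\"{o}dinger group to get the explicit bound $T_M\geq \|y_0\|/M$; your argument is more elementary and quantitative, and makes the role of $y_0\neq 0$ transparent. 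Second, you prove \eqref{2.5} first (restriction gives $M_{T_M}\leq M$; extension of a norm-optimal control plus the bang-bang property excludes $M_{T_M}<M$) and then deduce \eqref{2.4} from surjectivity of $M\mapsto T_M$, while the paper proves \eqref{2.4} first (extension gives $T_{M_T}\leq T$; the intermediate value theorem plus norm-optimality of $M_T$ excludes strict inequality) and then deduces \eqref{2.5} by injectivity --- mirror-image derivations from the same ingredients. One phrase in your right-continuity step deserves sharpening: uniform-in-$n$ strong equicontinuity of the states is not actually available for the Schr\"{o}dinger flow, but you do not need it; since $y(T_{M_n};u_n)=0$ and $\|u_n(t)\|\leq M_1$, Duhamel's formula gives $\|y(\alpha;u_n)\|\leq M_1(\alpha-T_{M_n})\rightarrow 0$ (or, as the paper does, truncate the controls after $T_{M_n}$ so that the states vanish identically at time $\alpha$), after which weak lower semicontinuity of the norm finishes the step.
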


\begin{proof} We organize the proof by several steps.

{\it Step 1. The map $T_M$ is strictly monotonically decreasing.}

Let $0<M_1 <M_2 $. We claim that $T_{M_1} > T_{M_2}$. Otherwise, we could have that $T_{M_1} \leq T_{M_2}$.
Thus, when $u_1$ is the optimal control  to $(TOCP)_{M_1}$,  the control $\tilde u\triangleq \chi_{(0,T_{M_1})}u_1$ is an optimal control to $(TOCP)_{M_2}$. Hence, it holds that
$$y(T_{M_2};\tilde u)=y(T_{M_1};\tilde u)=0 \mbox{ and } \|\tilde u\|_{L^\infty(\mathbb R^+;L^2(\Omega))} \leq M_1 < M_2. $$
By the bang-bang property of $(TOCP)_{M_2}$ (see Proposition 4.4 in \cite{4}), it follows that $\|\tilde u\|_{L^\infty(\mathbb R^+;L^2(\Omega))} = M_2$, which leads to a  contradiction.

{\it Step 2. The map $T_M$ is right-continuous.}

Let $M_1 > M_2 > \cdots >M_n > \cdots > M > 0$ and $\displaystyle\lim_{n\rightarrow\infty}M_n=M$. It suffices to show $\displaystyle\lim_{n\rightarrow\infty}T_{M_n}=T_M$.
If it did not hold, then  by the strictly decreasing monotonicity of the map $T_M$, we would have that $\displaystyle\lim_{n\rightarrow\infty}T_{M_n}=T_M-\delta$ for some $\delta > 0$. Clearly, the optimal control $u_n$ to $(TOCP)_{M_n}$ satisfies that  $y(T_{M_n};u_n)=0$ and $\|u_n\|_{L^{\infty}(\mathbb R^+;L^2(\Omega))} \leq M_n\leq M_1$ for $n\in\mathbb N^+$. Thus,  there exist a subsequence $\{u_{n_k}\}\subset\{u_n\}$ and $\tilde u\in L^{\infty}(\mathbb R^+;L^2(\Omega))$ such that
$$\chi_{(0,T_{M_{n_k}})}u_{n_k} \rightarrow \tilde u  \mbox{~weakly~star~in}~L^\infty(\mathbb R^+;L^2(\Omega)).$$
Then we have
$$y(T_M-\delta;\chi_{(0,T_{M_{n_k}})}u_{n_k})\rightarrow y(T_M-\delta;\tilde u) \mbox{ weakly in }L^2(\Omega)$$
and
$$\|\tilde u\|_{L^\infty(\mathbb R^+;L^2(\Omega))} \leq \displaystyle\liminf_{k\rightarrow\infty} \|u_{n_k}\|_{L^\infty(\mathbb R^+;L^2(\Omega))}=M.$$
So $\|y(T_M-\delta;\tilde u)\| \leq \displaystyle\liminf_{k\rightarrow\infty} \|y(T_M-\delta;\chi_{(0,T_{M_{n_k}})}u_{n_k})\|=0$. This  contradicts with the optimality of $T_M$.

{\it Step 3. The map $T_M$ is left-continuous.}

Let $0< M_1 < M_2 < \cdots <M_n < \cdots < M $ and $ \displaystyle\lim_{n\rightarrow\infty}M_n=M$. We claim that $\displaystyle\lim_{n\rightarrow\infty}T_{M_n}=T_M$. Otherwise, by the strictly monotonicity of the map $T_M$, we would have that $\displaystyle\lim_{n\rightarrow\infty}T_{M_n}=T_M+\delta$ for some $\delta > 0$. Then it holds that
\begin{equation}\label{2.1}
 T_{M_n} > T_M+\delta,~n=1,2,\cdots .
\end{equation}
Denote by $u^*(\cdot)$ and $y^*(\cdot)$ the optimal control and optimal state to $(TOCP)_M$ respectively.
For each $n\in \mathbb{N}^+$, we define
$$w_n(t)=\frac{M_n}{M}\chi_{(0,T_M)}(t)u^*(t),\;\; t\in [0,T]\;\;\mbox{ and }\;\;z_n(t)=\frac{M_n}{M}y^*(t),\;\;t\in \mathbb{R}^+.$$
It is clear that
\begin{eqnarray*}
 \left\{
  \begin{array}{lll}
   \partial_t z_n+i\Delta z_n=\chi_{\omega}w_n  &\mbox{in}&\Omega\times\mathbb R^+,\\
   z_n=0  &\mbox{on} &\partial \Omega\times \mathbb R^+,\\
   z_n(0)=\frac{M_n}{M}y_0,z_n(T_M)=0  &\mbox{in} &\Omega
  \end{array}
 \right.
\end{eqnarray*}
and
$$\|w_n\|_{L^\infty(\mathbb R^+;L^2(\Omega))} \leq M_n \mbox{ for } n\in\mathbb N^+.$$
Consider the following equation:
\begin{eqnarray*}
 \left\{
  \begin{array}{lll}
   \partial_t p_n+i\Delta p_n=\chi_{\omega}\chi_{(T_M,T_M+\delta)}w^\prime_n  &\mbox{in}&\Omega\times\mathbb R^+,\\
   p_n=0  &\mbox{on} &\partial \Omega\times \mathbb R^+,\\
   p_n(0)=(1-\frac{M_n}{M})y_0  &\mbox{in} &\Omega.
  \end{array}
 \right.
\end{eqnarray*}
By the  $L^\infty-$exact controllability for Schr\"{o}dinger equations, there exists a positive constant $C(T_M,\delta)$ such that for each $n$, there is  a control $w^{\prime\prime}_n$ satisfying
$$\|w^{\prime\prime}_n\|_{L^\infty(\mathbb R^+;L^2(\Omega))} \leq C(T_M,\delta)(1-\frac{M_n}{M}) \|y_0\|$$
and
 $$p_n(T_M+\delta)=0.$$
Then we can find an $n_0\in\mathbb N^+$ such that
$$C(T_M,\delta)(1-\frac{M_n}{M}) \|y_0\| \leq M_n,\;\;\mbox{when}\;\; n \geq n_0.$$
Set
$$\tilde u_{n_0}=\chi_{(0,T_M)}w_{n_0}+\chi_{(T_M,T_M+\delta)}w^{\prime\prime}_{n_0}\;\;\mbox{over}\;\; \mathbb{R}^+.$$
Clearly,
$$\|\tilde u_{n_0}\|_{L^\infty(\mathbb R^+;L^2(\Omega))} \leq M_{n_0}$$
and
$$y(\cdot;\tilde u_{n_0})=z_{n_0}(\cdot;w_{n_0})+p_{n_0}(\cdot;w^{\prime\prime}_{n_0})~ \mbox{and}~ y(T_M+\delta;\tilde u_{n_0})=0.$$
Hence,  $\tilde u_{n_0}\in \mathcal U_{M_{n_0}}$ and $T_{M_{n_0}} \leq T_M+\delta$. These contradict to (\ref{2.1}).

{\it Step 4. It holds that $\displaystyle \lim_{M\rightarrow 0^+} T_M=\infty$.}

If this didn't stand, then, according to the monotonicity of the map $T_M$, we would have that when $M >0$,  $T_M < T_0$ for some positive number $T_0 > 0$. Let $\{M_n\}\subset\mathbb R^+$ be a sequence  satisfying $\displaystyle\lim_{n\rightarrow\infty}M_n =0$. Write $u_n$ for an optimal control to $(TOCP)_{M_n}$. Then we have
\begin{equation*}
 \|u_n\|_{L^\infty(\mathbb R^+;L^2(\Omega))} \leq M_n < M_0 \mbox{ for some }M_0 >0 \mbox{ and all } n\in\mathbb N^+.~
\end{equation*}
From these,  there exist a subsequence $\{u_{n_k}\}\subset\{u_n\}$ and $\tilde u\in L^{\infty}(\mathbb R^+;L^2(\Omega))$ such that
\begin{equation}\label{aa}
 \chi_{(0,T_{M_{n_k}})}u_{n_k}\rightarrow \tilde u \mbox{~weakly~star~in}~L^\infty(\mathbb R^+;L^2(\Omega)).
\end{equation}
So it follows that
\begin{equation*}
 \|\tilde u\|_{L^\infty(\mathbb R^+;L^2(\Omega))} \leq \displaystyle\liminf_{k\rightarrow\infty}\|u_{n_k}\|_{L^\infty(\mathbb R^+;L^2(\Omega))} \leq \displaystyle\liminf_{k\rightarrow\infty} M_{n_k}=0.
\end{equation*}
This implies that  $\tilde u=0$. On the other hand, by (\ref{aa}) and the fact that $T_M < T_0$, we get
\begin{equation*}
 y(T_0;\chi_{(0,T_{M_{n_k}})}u_{n_k}) \rightarrow y(T_0;\tilde u) \mbox{ weakly in } L^2(\Omega).
\end{equation*}
So $\|y(T_0;\tilde u)\| \leq \displaystyle\liminf_{k\rightarrow\infty}\|y(T_0;\chi_{(0,T_{M_{n_k}})}u_{n_k})\|$. This, along with the fact that $y(T_{M_{n_k}};u_{n_k})=0$, indicates that
 $y(T_0;\tilde u)=0$, namely,  $y(T_0;0)=0$, which  contradicts to the assumption that  $y_0\neq 0$.

{\it Step 5. $\displaystyle\lim_{M\rightarrow\infty} T_M = 0$.}

If it didn't hold, then by the monotonicity of the map $T_M$, we could have that $\displaystyle\lim_{M\rightarrow\infty}T_M > 2T_0$ for some $T_0>0$. This, together with the monotonicity of $T_M$, yields that
 $T_M > T_0$ when $M > 0$. According to the $L^\infty$-exact controllability for Sch\"{o}dinger equations, there exist a constant $C(T_0)$ and a control $\hat u$ with $\|\hat u\|_{L^\infty(\mathbb R^+;L^2(\Omega))} \leq C(T_0)\|y_0\|$ such that $y(T_0;\hat u)=0$. Thus, by the definition of the minimal time $T_M$, we have that $T_M \leq T_0$ when $M > C(T_0)\|y_0\|$. This contradicts to the assumption that $\displaystyle\lim_{M\rightarrow\infty}T_M > 2T_0$.

{\it Step 6. $T_{M_T}=T \mbox{ for } T\in\mathbb R^+$.}

Let $T > 0$ and $v_1$ be an optimal control to $(NOCP)_T$ (see Lemma \ref{Lemma zhang}). Then we have
\begin{equation*}
 y(T;v_1)=0 \mbox{~and~} \|v_1\|_{L^{\infty}((0,T);L^2(\Omega))}=M_T.
\end{equation*}
We extend  $v_1$ over $\mathbb R^+$ by setting it to be zero over $(T,\infty)$, and denote this extension  by $\tilde v_1$. It is clear that  $\tilde v_1 \in \mathcal U_{M_T}$. Then according to the definition of the optimal time $T_{M_T}$ to $(TOCP)_{M_T}$, it holds that $T_{M_T} \leq T$. We claim that $T_{M_T} = T$. Seeking for a contradiction, we could assume that $T_{M_T} < T$. Then by the strictly deceasing monotonicity and continuity of the map $T_M$, there would be a positive number $M_1$ with $0 < M_1 < M_T$ such that $T_{M_1} =T$. We denote by $u_2$ the optimal control to $(TOCP)_{M_1}$. Then it holds that
\begin{equation*}
 y(T;u_2)=0 \mbox{~and~} \|u_2\|_{L^\infty(\mathbb R^+;L^2(\Omega))}\leq M_1.
\end{equation*}
Clearly $u_2|_{(0,T)} \in \mathcal V_T$ and $M_1 < M_T$. These  contradict with the optimality of  $M_T$ to $(NOCP)_T$.

{\it Step 7. $M_{T_M}=M$ when $M\in\mathbb R^+$.}

Let $0 < M < \infty$. Using the fact $T_{M_T}=T$ for every $T > 0$ in step 6 and setting $T=T_M$, we have $T_{M_{T_M}}=T_M$ since $T_M > 0$. Then by the strictly monotonicity of the map $T_M$, it must hold that $M_{T_M}=M$.
\end{proof}

Now, we prove Theorem \ref{Theorem 1.2}.
\begin{proof}[Proof of Theorem \ref{Theorem 1.2}]
 Let $M > 0$ and $u_1$ be an optimal control to $(TOCP)_M$. Then it holds that
\begin{equation}\label{ab}
 y(T_M;u_1)=0 \mbox{~and~} \|u_1\|_{L^\infty(\mathbb R^+;L^2(\Omega))}\leq M.
\end{equation}
So $u_1|_{(0,T_M)} \in \mathcal V_{T_M}$. By (\ref{ab}) and the fact that $M=M_{T_M}$ \big(see (\ref{2.5})\big), we have
\begin{equation*}
 y(T_M;u_1|_{(0,T_M)})=0 \mbox{~and~} \big\|u_1|_{(0,T_M)}\big\|_{L^\infty((0,T_M);L^2(\Omega))}\leq M=M_{T_M}.
\end{equation*}
Then the control $u_1|_{(0,T_M)}$ is an optimal control to $(NOCP)_{T_M}$. \\
Let $T > 0$ and $v_2$ be an optimal control to $(NOCP)_T$. Then one can deduce that
\begin{equation}\label{ac}
 y(T;v_2)=0 \mbox{~and~} \|v_2\|_{L^\infty((0,T);L^2(\Omega))}=M_T.
\end{equation}
Extend $v_2$ over $\mathbb R^+$ by setting it to be zero outside $(0,T)$ and denote this extension  by $\tilde v_2$. Then we have $\tilde v_2 \in \mathcal U_{M_T}$.
By (\ref{ac}) and the fact that $T_{M_T}=T$ \big(see (\ref{2.4})\big), we obtain that
\begin{equation*}
 y(T_{M_T};\tilde v_2)=y(T;\tilde v_2)=0 \mbox{~and~} \|\tilde v_2\|_{L^\infty(\mathbb R^+;L^2(\Omega))}=M_T.
\end{equation*}
Thus,  the control $\tilde v_2$ is an optimal control to $(TOCP)_{M_T}$.
\end{proof}
\vskip 10pt


\bigskip
\section{Equivalence of three optimal control problems}

$~~~$ In this section, we fix  $T>0$  and simply write  $\|\cdot\|_\infty$ for the norm of  $L^\infty((0,T);L^2(\Omega))$.

\subsection{Properties of the function $\tau\rightarrow M^\tau$, $\tau\in [0,T)$}

Recall that   the function $M^\tau$  is defined by (\ref{bb}).
Then for each $\tau\in [0,T)$, $M^\tau$ gives the following optimal norm control problem $(NP)^{\tau,0}$:
$$M^\tau=\inf\{\|\chi_{(\tau,T)}u\|_{L^{\infty}((0,T);L^2(\Omega))};~ u\in L^{\infty}((0,T);L^2(\Omega)),~ y(T;\chi_{(\tau,T)}u,y_0)=z_d\},$$
where $y(\cdot;\chi_{(\tau,T)}u,y_0)$ satisfies Equation (\ref{1.1}) with initial data $y_0$. \\
Set $z_{d^\prime}=z_d-y(T;0,y_0)$. Since $r_T=\|y(T;0,y_0)-z_d\| > 0$, it holds that  $z_{d^\prime} \neq 0$.
Clearly, the problem $(NP)^{\tau,0}$ is equivalent to the following  problem:
$$M_{1}^{\tau}\triangleq\inf\{\|\chi_{(\tau,T)}u\|_{L^{\infty}((0,T);L^2(\Omega))};~ u\in L^{\infty}((0,T);L^2(\Omega)),~ y(T;\chi_{(\tau,T)}u,0)=z_{d^\prime}\},$$
 where $y(\cdot;\chi_{(\tau,T)}u,0)$ satisfies Equation (\ref{1.1}) where $y_0=0$.
 One can easily check that $M_{1}^\tau=M^\tau$.
We define  $$\bar z(t)=y(T-t;\chi_{(0,T-\tau)}u(T-t),0)\;\;\mbox{and}\;\;\bar v(t)= -\bar{u}(T-t),\;\; t\in [0,T].$$
It is clear that
\begin{eqnarray}\label{1.3}
 \left\{
  \begin{array}{lll}
   \partial_t z+i\Delta z=\chi_{\omega}\chi_{(0,T-\tau)}v  &\mbox{in}&\Omega\times(0,T),\\
   z=0  &\mbox{on} &\partial \Omega\times (0,T),\\
   z(0)=z_{d^\prime},z(T-\tau)=0  &\mbox{in} &\Omega.
  \end{array}
 \right.
\end{eqnarray}
Then, the problem $(NP)^{\tau,0}$ equals to the problem:
$$M_{2}^{\tau}\triangleq\inf\{\|v\|_{L^{\infty}((0,T-\tau);L^2(\Omega))};~ v\in L^{\infty}((0,T);L^2(\Omega)),~ z(T;\chi_{(0,T-\tau)}v,z_{d^\prime})=0\},$$
where $z(t;\chi_{(0,T-\tau)}v,z_{d^\prime})$ verifies Equation (\ref{1.3}).
In summary, we conclude that the functions $M^\tau$ and  $M_{2}^{\tau}$ are the same.

\begin{Lemma}\label{Lemma Mtau}
 Let $\tau\in [0,T)$. The map $\tau\longrightarrow M^\tau$ is strictly monotonically increasing and continuous from $[0,T)$ to $[M^0,\infty)$. Moreover, it holds that $\displaystyle\lim_{\tau\rightarrow T^-} M^\tau=\infty$.
\end{Lemma}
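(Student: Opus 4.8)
The plan is to recognise that, after the time-reversal carried out above, $M^\tau$ is nothing but the minimal-norm problem of steering the fixed nonzero datum $z_{d'}$ to the origin, and then to transfer the properties of the minimal-norm map from Section~2 through the substitution $s=T-\tau$. Concretely, for $s>0$ let $N_s$ denote the minimal value of $\|v\|_{L^\infty((0,s);L^2(\Omega))}$ among all controls $v$ driving $z_{d'}$ to $0$ at time $s$; this is exactly the analogue of the map $M_T$ from Section~2 with $z_{d'}$ in place of $y_0$. Since $M^\tau=M_2^\tau$ and Equation~(\ref{1.3}) is precisely this minimal-norm problem posed on the interval $(0,T-\tau)$, we obtain the identity $M^\tau=N_{T-\tau}$ for every $\tau\in[0,T)$.

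The essential observation is that $z_{d'}\neq 0$, so the whole of Section~2 applies verbatim with $z_{d'}$ substituted for $y_0$. Indeed, the proof of Lemma~\ref{Lemma 1.1} uses only the $L^\infty$-exact controllability, the existence and bang-bang property of the time-optimal controls, and the single hypothesis that the initial datum is nonzero; none of these features is particular to $y_0$. Consequently the time-optimal map associated with $z_{d'}$ is strictly decreasing and continuous on $\mathbb{R}^+$, and its inverse $s\mapsto N_s$ is likewise strictly decreasing and continuous on $\mathbb{R}^+$, with $\displaystyle\lim_{s\to 0^+}N_s=\infty$ and $\displaystyle\lim_{s\to\infty}N_s=0$.

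It then suffices to compose with the strictly decreasing continuous change of variable $\tau\mapsto T-\tau$. As $\tau$ increases over $[0,T)$, $s=T-\tau$ decreases, so $M^\tau=N_{T-\tau}$ is strictly increasing and continuous; it equals $N_T=M^0$ at $\tau=0$, and as $\tau\to T^-$ one has $s\to 0^+$, whence $M^\tau\to\infty$. Therefore $\tau\mapsto M^\tau$ is a strictly increasing continuous bijection of $[0,T)$ onto $[M^0,\infty)$ with the stated limit, which is the full assertion of the lemma.

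The only step demanding care is the legitimacy of invoking Lemma~\ref{Lemma 1.1} for the datum $z_{d'}$ rather than $y_0$; once one verifies, as above, that its proof exploits no property of $y_0$ beyond being nonzero, everything else is a routine transfer of monotonicity, continuity and boundary behaviour through an affine reparametrisation of time. Should one wish to avoid this appeal, each of the three conclusions can be re-derived directly by the same weak-star compactness and bang-bang arguments as in Section~2, but the reduction above is far shorter.
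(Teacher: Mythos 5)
Your proposal is correct and takes essentially the same route as the paper: both identify $M^\tau$, via the time-reversal identity $M^\tau=M_2^\tau$, with the Section~2 minimal-norm map evaluated at $T-\tau$ for the nonzero datum $z_{d'}$, and then transfer the monotonicity, continuity and limit properties from Lemma~\ref{Lemma 1.1} through the substitution $\tau\mapsto T-\tau$. You in fact make explicit a point the paper leaves implicit, namely that Lemma~\ref{Lemma 1.1} is being invoked for the initial datum $z_{d'}$ rather than $y_0$, which is legitimate precisely because $z_{d'}\neq 0$ and the proof of that lemma uses nothing else about the datum.
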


\begin{proof}
First of all, we have
\begin{equation*}
 M^\tau=M_{2}^\tau,\;\;\tau\in [0,T).
\end{equation*}
 One can easily check that
\begin{equation*}
 M_2^\tau=M_{T-\tau}, \;\;\tau\in [0,T).
\end{equation*}
Thus, the map $\tau\rightarrow M^\tau$, $\tau\in [0,T)$ is the same as the map $\tau\rightarrow M_{T-\tau}$,  $\tau\in [0,T)$ . This, along with lemma \ref{Lemma 1.1}, gives the desired properties of the map $M^\tau$.
\end{proof}

\subsection{Equivalence between the optimal and norm target problems }

$~~~$ We begin with studying the properties of the optimal target problem $(OP)^{M,\tau}$.

\begin{Lemma}\label{Lemma OP}
Let $M^\tau$  be defined by (\ref{bb}). Suppose that  $(\tau, M)\in [0,T)\times [0,M^\tau)$. Then, \\
$(i)$ $(OP)^{M,\tau}$ has optimal controls;
 $(ii)$ $r(M,\tau)>0$;
$(iii)$ $u^*$ is an optimal control to $(OP)^{M,\tau}$ if and only if $u^*\in \mathcal U_{M,\tau}$ satisfies the maximal condition
   \begin{equation}\label{3.1}
    Re\int^T_0 \langle \chi_{(\tau,T)}(t)\chi_{\omega}\varphi^*(t),u^*(t) \rangle dt = \displaystyle\max_{v(\cdot)\in\mathcal U_{M,\tau}} Re\int^T_0 \langle \chi_{(\tau,T)}(t)\chi_{\omega}\varphi^*(t),v(t) \rangle dt,
   \end{equation}
   where $\varphi^*$ is the solution to the adjoint equation
   \begin{eqnarray}\label{3.2}
    \left\{
     \begin{array}{lll}
      \partial_t \varphi^*-i\Delta \varphi^*=0  &\mbox{in}&\Omega\times(0,T),\\
      \varphi^*=0  &\mbox{on} &\partial \Omega\times (0,T),\\
      \varphi^*(T)=-(y^*(T)-z_d)  &\mbox{in} &\Omega
     \end{array}
    \right.
   \end{eqnarray}
   and $y^*$ solves Equation (\ref{1.1})
   \begin{eqnarray}\label{3.3}
    \left\{
     \begin{array}{lll}
      \partial_t y^*+i\Delta y^*=\chi_{\omega}\chi_{(\tau,T)}u^*  &\mbox{in}&\Omega\times(0,T),\\
      y^*=0  &\mbox{on} &\partial \Omega\times (0,T),\\
      y^*(0)=y_0 &\mbox{in}&\Omega.
     \end{array}
    \right.
   \end{eqnarray}

\end{Lemma}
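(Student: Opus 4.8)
The plan is to treat the three assertions in order, exploiting the convex-analytic structure of $(OP)^{M,\tau}$ throughout. Write $L\colon u\mapsto y(T;\chi_{(\tau,T)}u,0)$ for the bounded linear control-to-final-state map, so that $y(T;\chi_{(\tau,T)}u,y_0)=y(T;0,y_0)+Lu$ and the cost $u\mapsto\|y(T;\chi_{(\tau,T)}u,y_0)-z_d\|$ is the composition of an affine map with a norm; in particular it is convex, and $\mathcal{U}_{M,\tau}$ is a convex set. Since only $\chi_{(\tau,T)}u$ enters the dynamics, I may always normalize controls to vanish on $(0,\tau)$.

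For $(i)$ I would run the direct method. Take a minimizing sequence $\{u_n\}\subset\mathcal{U}_{M,\tau}$, normalized to vanish on $(0,\tau)$, so that $\{\chi_{(\tau,T)}u_n\}$ is bounded by $M$ in $L^\infty((0,T);L^2(\Omega))$. Extract a subsequence converging weakly-star to some $w$; because the set $\{v:\|v(t)\|\le M\text{ a.e. on }(\tau,T),\ v=0\text{ on }(0,\tau)\}$ is convex and weak-star closed, $w=\chi_{(\tau,T)}u^*$ for some $u^*\in\mathcal{U}_{M,\tau}$. Weak-star convergence of the controls forces weak convergence in $L^2(\Omega)$ of the final states $y(T;\chi_{(\tau,T)}u_n,y_0)\rightharpoonup y(T;\chi_{(\tau,T)}u^*,y_0)$ (the map $L$ being weak-star-to-weak continuous), and lower semicontinuity of the norm then gives $\|y(T;\chi_{(\tau,T)}u^*,y_0)-z_d\|\le r(M,\tau)$, so $u^*$ is optimal. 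Assertion $(ii)$ follows from $(i)$ together with the definition of $M^\tau$: if $r(M,\tau)=0$, then the optimal control $u^*$ from $(i)$ satisfies $y(T;\chi_{(\tau,T)}u^*,y_0)=z_d$, i.e. $u^*\in\mathcal{S}^\tau$, whence $M^\tau\le\|\chi_{(\tau,T)}u^*\|_\infty\le M$, contradicting $M<M^\tau$.

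The heart of the lemma is $(iii)$, and this is where I expect the real work. Since the problem is convex, $u^*$ is optimal if and only if the first-order variational inequality holds: the real directional derivative of the (equivalent) squared cost $\tfrac12\|y(T;\chi_{(\tau,T)}\cdot,y_0)-z_d\|^2$ at $u^*$ in every admissible direction $v-u^*$ is nonnegative, i.e. $\mathrm{Re}\langle y^*(T)-z_d,\,L(v-u^*)\rangle\ge0$ for all $v\in\mathcal{U}_{M,\tau}$. Geometrically this says $y^*(T)$ is the metric projection of $z_d$ onto the closed convex reachable set. To turn this abstract condition into (\ref{3.1}), I would introduce the adjoint state $\varphi^*$ solving (\ref{3.2}) with terminal datum $-(y^*(T)-z_d)$ and invoke the duality identity between the forward equation (\ref{3.3}) and the adjoint, namely $\langle\varphi^*(T),Lv\rangle=\int_0^T\langle\chi_{(\tau,T)}\chi_\omega\varphi^*,v\rangle\,dt$, obtained by pairing the two equations and integrating by parts in space and time. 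Substituting, and using $\varphi^*(T)=-(y^*(T)-z_d)$, the variational inequality becomes exactly the statement that $u^*$ maximizes $v\mapsto\mathrm{Re}\int_0^T\langle\chi_{(\tau,T)}\chi_\omega\varphi^*,v\rangle\,dt$ over $\mathcal{U}_{M,\tau}$, which is (\ref{3.1}).

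Because the variational inequality is both necessary and sufficient for a minimizer of a convex functional over a convex set, this yields the claimed equivalence in both directions at once. I expect the main obstacles to be bookkeeping rather than conceptual: establishing the duality identity with the correct normalization of the adjoint, so that the real parts and the cutoffs $\chi_\omega,\chi_{(\tau,T)}$ land in the right slots, and verifying that the first-order condition is genuinely two-sided — which is precisely what the convexity of the cost and of $\mathcal{U}_{M,\tau}$ buys us. The positivity $r(M,\tau)>0$ from $(ii)$ also guarantees that $\varphi^*(T)\neq0$, so the maximal condition is nontrivial.
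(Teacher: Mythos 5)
Your proposal is correct and follows essentially the same route as the paper: the direct method with weak-star compactness for $(i)$, the contradiction with the definition of $M^\tau$ for $(ii)$, and for $(iii)$ the first-order optimality condition for the convex cost rewritten via the adjoint state. The paper's explicit expansion of $\|y_\varepsilon(T)-z_d\|^2-\|\tilde y(T)-z_d\|^2$ (its formula (\ref{3.6}), with the nonnegative $\varepsilon^2$ term giving sufficiency at $\varepsilon=1$ and necessity as $\varepsilon\to 0^+$) is exactly the hands-on version of your abstract convexity-plus-duality argument.
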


\begin{proof}
 $(i)$ According to the definition of $r(M,\tau)$, we have
 \begin{equation*}
  r(M,\tau)=\displaystyle\inf_{u\in\mathcal U_{M,\tau}} \|y(T;\chi_{(\tau,T)}u,y_0)-z_d\|.
 \end{equation*}
 Then there is a sequence $\{u_n\} \subset\mathcal U_{M,\tau}$ such that
 \begin{equation}\label{3.4}
  \|y(T;\chi_{(\tau,T)}u_n,y_0)-z_d\| \rightarrow r(M,\tau) \mbox{  \;\;as } n\rightarrow\infty.
 \end{equation}
 On the other hand, it holds that $\|\chi_{(\tau,T)}u_n\|_{\infty} \leq M$. So there exist a subsequence $\{u_{n_k}\}\subset\{u_n\}$ and $\tilde u\in L^{\infty}((0,T);L^2(\Omega))$ such that
 \begin{equation}\label{zhang ac}
  \chi_{(\tau,T)}u_{n_k} \rightarrow \tilde u \mbox{ weakly star in} ~L^{\infty}((0,T);L^2(\Omega))
 \end{equation}
 and
 \begin{equation*}
  \|\tilde u\|_{\infty} \leq \displaystyle\liminf_{k\rightarrow\infty} \|\chi_{(\tau,T)}u_{n_k}\|_{\infty} \leq M.
 \end{equation*}
 Thus, it holds that  $\tilde u \in\mathcal U_{M,\tau}$. By (\ref{zhang ac}), we can deduce that
 \begin{equation*}
  y(T;\chi_{(\tau,T)}u_{n_k},y_0)) \rightarrow y(T;\chi_{(\tau,T)}\tilde u,y_0)) \mbox{ weakly in } L^2(\Omega).
 \end{equation*}
 This, together with  (\ref{3.4}) and the definition of $r(M,\tau)$, indicates  that
 \begin{equation*}
  r(M,\tau)\leq \|y(T;\chi_{(\tau,T)}\tilde u,y_0))-z_d\| \leq \displaystyle\liminf_{k\rightarrow\infty} \|y(T;\chi_{(\tau,T)}u_{n_k},y_0))-z_d\| =r(M,\tau).
 \end{equation*}
From this and the fact that  $\tilde u \in\mathcal U_{M,\tau}$, it follows that  $\tilde u$ is an optimal control to $(OP)^{M,\tau}$.

 $(ii)$ According to  $(i)$, there is a control $u\in L^{\infty}((0,T);L^2(\Omega))$ such that
 \begin{equation*}
  \|y(T;\chi_{(\tau,T)}u,y_0)-z_d\| =r(M,\tau)  \mbox{~and~} \|\chi_{(\tau,T)}u\|_{\infty} \leq M.
 \end{equation*}
 If $r(M,\tau)=0$, then by the definition of $M^\tau$, we can obtain that $\|\chi_{(\tau,T)}u\|_{\infty} \geq M^\tau$. So it contradicts to the assumption that $M\in [0,M^\tau)$.

 $(iii)$ For every $\tilde u$, $u \in\mathcal U_{M,\tau}$ and $\varepsilon > 0$, set $u_\varepsilon=(1-\varepsilon)\tilde u + \varepsilon u$. Write $\tilde y(t)=y(t;\chi_{(\tau,T)}\tilde u,y_0)$, $y(t)=y(t;\chi_{(\tau,T)}u,y_0)$ and $y_\varepsilon(t)=y(t;\chi_{(\tau,T)}u_\varepsilon,y_0)$, respectively.
It is clear that
 \begin{eqnarray}\label{3.6}
  &\;&\|y_\varepsilon(T)-z_d\|^2 - \|\tilde y(T)-z_d\|^2 \nonumber \\
  &=&\varepsilon^2 \left\|\int^T_0 e^{-i\Delta(T-t)}\chi_{(\tau,T)}\chi_\omega(u-\tilde u) dt\right\|^2 \nonumber\\
  &\;&+ 2\varepsilon \left(Re\int^T_0 \langle e^{-i\Delta(T-t)}\chi_{(\tau,T)}\chi_\omega(u-\tilde u),\tilde y(T)-z_d \rangle dt\right) \nonumber \\
  &=&\varepsilon^2 \left\|\int^T_0 e^{-i\Delta(T-t)}\chi_{(\tau,T)}\chi_\omega(u-\tilde u) dt\right\|^2\nonumber\\
  &\;&+ 2\varepsilon \left(Re\int^T_0 \langle (\tilde u-u),\chi_{(\tau,T)}\chi_\omega e^{i\Delta(T-t)}(-(\tilde y(T)-z_d)) \rangle dt\right).
 \end{eqnarray}
Let $u^*$ be an optimal control to $(OP)^{M,\tau}$. Taking  $\tilde u=u^*$ in (\ref{3.6}), we obtain that
 \begin{equation*}
  Re\int^T_0 \langle (u^*-u),\chi_{(\tau,T)}\chi_\omega e^{i\Delta(T-t)}(-(y^*(T)-z_d)) \rangle dt \geq 0 \mbox{~for all~} u\in\mathcal U_{M,\tau}.
 \end{equation*}
This implies that
 \begin{equation*}
  Re\int^T_0 \langle \chi_{(\tau,T)}(t)\chi_{\omega}\varphi^*(t),u^*(t) \rangle dt = \displaystyle\max_{v(\cdot)\in\mathcal U_{M,\tau}} Re\int^T_0 \langle \chi_{(\tau,T)}(t)\chi_{\omega}\varphi^*(t),v(t) \rangle dt,
 \end{equation*}
 where $\varphi^*$ and $y^*$ satisfy the equations (\ref{3.2}) and (\ref{3.3}) respectively. \\
 Conversely, for every $u\in\mathcal U_{M,\tau}$ and $\varepsilon > 0$, we suppose that $\tilde u\in \mathcal U_{M,\tau}$ satisfies (\ref{3.1}). Set $u_\varepsilon=(1-\varepsilon)\tilde u + \varepsilon u$. Then by (\ref{3.6}) and (\ref{3.1}), one can easily obtain that
 \begin{eqnarray*}
  &\;&\|y_\varepsilon(T)-z_d\|^2 - \|\tilde y(T)-z_d\|^2  \\
  &=&\varepsilon^2\left\|\int^T_0 e^{-i\Delta(T-t)}\chi_{(\tau,T)}\chi_\omega(u-\tilde u) dt\right\|^2  \\
  &\;& + 2\varepsilon\left(Re\int^T_0 \langle (\tilde u-u),\chi_{(\tau,T)}\chi_\omega e^{i\Delta(T-t)}(-(\tilde y(T)-z_d)) \rangle dt\right) \\
  &\geq& 0.
 \end{eqnarray*}
Taking  $\varepsilon =1$ in the above,  we get
 \begin{equation*}
  \|y(T)-z_d\|^2 - \|\tilde y(T)-z_d\|^2 \geq 0  \mbox{~for every~}u\in\mathcal U_{M,\tau}.
 \end{equation*}
 This shows that  $\tilde u$ is an optimal control to $(OP)^{M,\tau}$.
\end{proof}

\begin{Remark}\label{Rk maximal}
 $(a)$ The conclusion $(i)$ in Lemma~\ref{Lemma OP}  holds for all $M\geq 0$, in particular, for the case where $M\geq M^\tau$; \\
 $(b)$ When we get rid of the assumption that $M< M^\tau$, a control $u\in\mathcal U_{M,\tau}$ satisfying the maximal condition (\ref{3.1}) must be an optimal control to $(OP)^{M,\tau}$. This can be deduced from (\ref{3.6}). In other words, the result $(iii)$ in Lemma \ref{Lemma OP} still stands without  the assumption that $M< M^\tau$. This can be observed from  the proof of $(iii)$.
  However, to have the conclusion $(ii)$ in Lemma \ref{Lemma OP}, we necessarily need  the assumption that $0\leq M<M^\tau$.
\end{Remark}

\begin{Lemma}\label{Lemma 3.3}
 Let $(\tau, M)\in [0,T)\times [0,M^\tau)$. Then it holds that
 \begin{description}
  \item $(i)$ $u^*$ is an optimal control to $(OP)^{M,\tau}$ if and only if $u^*\in L^{\infty}((0,T);L^2(\Omega))$ satisfies
   \begin{equation}\label{3.7}
    u^*(t) = M\frac{\chi_\omega \varphi^*(t)}{\|\chi_\omega \varphi^*(t)\|}  \mbox{ for~a.e.~} t\in (\tau,T),
   \end{equation}
   \noindent where $\varphi^*$ and $y^*$ satisfies Equation (\ref{3.2}) and Equation (\ref{3.3}) respectively;
  \item $(ii)$ $(OP)^{M,\tau}$ has the $bang-bang$ property, i.e., every optimal control $u^*$ holds that $\|u^*(t)\| = M~\mbox{ for a.e.}~t\in (\tau,T)$;
  \item $(iii)$ $(OP)^{M,\tau}$ has the unique optimal control, in the sense of that every two optimal controls $u_1$ and $u_2$ to $(OP)^{M,\tau}$ hold that $u_1(t)=u_2(t)$ for a.e. $t\in (\tau,T)$.
 \end{description}
\end{Lemma}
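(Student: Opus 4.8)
The plan is to read off all three parts from the maximal condition (\ref{3.1}) of Lemma~\ref{Lemma OP}(iii) by examining it pointwise in time. Fix an optimal control $u^*$, with associated adjoint state $\varphi^*$ solving (\ref{3.2}). For a.e. $t\in(\tau,T)$ and every $v$ with $\|v\|\le M$, the Cauchy--Schwarz inequality gives $Re\langle\chi_\omega\varphi^*(t),v\rangle\le\|\chi_\omega\varphi^*(t)\|\,\|v\|\le M\|\chi_\omega\varphi^*(t)\|$, the bound being attained at $v=M\chi_\omega\varphi^*(t)/\|\chi_\omega\varphi^*(t)\|$ wherever $\chi_\omega\varphi^*(t)\neq0$. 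Using this pointwise maximizer as a competitor in (\ref{3.1}) and comparing the two integrals forces $Re\langle\chi_\omega\varphi^*(t),u^*(t)\rangle=M\|\chi_\omega\varphi^*(t)\|$ for a.e. $t$; the equality case of Cauchy--Schwarz together with $\|u^*(t)\|\le M$ then yields the representation (\ref{3.7}) on the set where $\chi_\omega\varphi^*(t)\neq0$. For the converse direction of (i), any $u^*$ obeying (\ref{3.7}) satisfies $\|u^*(t)\|=M$ a.e. and hence lies in $\mathcal U_{M,\tau}$, while (\ref{3.7}) makes (\ref{3.1}) an identity, so $u^*$ is optimal by Lemma~\ref{Lemma OP}(iii).

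The crux, which I expect to be the main obstacle, is to rule out the degenerate set $E=\{t\in(\tau,T):\chi_\omega\varphi^*(t)=0\}$, i.e. to prove $|E|=0$; otherwise (\ref{3.7}) is ill-posed on $E$ and the bang-bang equality can fail there. I would argue by contradiction via unique continuation. By Lemma~\ref{Lemma OP}(ii) we have $r(M,\tau)>0$, so $\varphi^*(T)=-(y^*(T)-z_d)\neq0$; since the free Schr\"odinger group generated by $i\Delta$ (with Dirichlet boundary conditions) is unitary on $L^2(\Omega)$, the quantity $\|\varphi^*(t)\|$ is independent of $t$ and therefore $\varphi^*(t)\neq0$ for every $t$. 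If $|E|>0$, then $\varphi^*$ vanishes on $\omega\times E$ with $E$ of positive measure; the unique continuation property for the Schr\"odinger equation under the geometric control condition on $\omega$ (the same mechanism that underlies the observability inequality and the bang-bang property quoted from Proposition 4.4 in \cite{4}) then forces $\varphi^*\equiv0$, contradicting $\varphi^*(T)\neq0$. Hence $|E|=0$, so (\ref{3.7}) holds for a.e. $t\in(\tau,T)$ and part (i) is complete.

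Part (ii) is then immediate: by (\ref{3.7}), $\|u^*(t)\|=M\|\chi_\omega\varphi^*(t)\|/\|\chi_\omega\varphi^*(t)\|=M$ for a.e. $t\in(\tau,T)$, which is exactly the bang-bang property. For (iii) I would first show that any two optimal controls produce the same terminal state. Since $u\mapsto y(T;\chi_{(\tau,T)}u,y_0)$ is affine, the functional $u\mapsto\|y(T;\chi_{(\tau,T)}u,y_0)-z_d\|^2$ is convex and $\mathcal U_{M,\tau}$ is convex. Given two optimal controls $u_1,u_2$, set $a=y_1(T)-z_d$ and $b=y_2(T)-z_d$, so $\|a\|=\|b\|=r(M,\tau)$; the parallelogram identity gives $\|(a+b)/2\|^2+\|(a-b)/2\|^2=r(M,\tau)^2$, and because $(u_1+u_2)/2\in\mathcal U_{M,\tau}$ and $(a+b)/2=y(T;\chi_{(\tau,T)}(u_1+u_2)/2,y_0)-z_d$, optimality yields $\|(a+b)/2\|\ge r(M,\tau)$, whence $\|(a-b)/2\|=0$ and $y_1(T)=y_2(T)$. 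Thus $u_1$ and $u_2$ share the same terminal datum $\varphi^*(T)$, hence the same adjoint $\varphi^*$, so the representation (\ref{3.7}) forces $u_1(t)=u_2(t)$ for a.e. $t\in(\tau,T)$, which is the asserted uniqueness.
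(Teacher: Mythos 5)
Your proposal is correct, and parts $(i)$--$(ii)$ follow essentially the paper's own route: reduce the integral maximal condition (\ref{3.1}) to a pointwise maximization over the ball $\{\|v\|\leq M\}$ (the paper does this via the sign-rotation trick yielding (\ref{3.8})--(\ref{3.9}), you via Cauchy--Schwarz and a pointwise maximizer --- the same content), and then dispose of the degenerate set $\{t:\chi_\omega\varphi^*(t)=0\}$ by combining $\varphi^*(T)=-(y^*(T)-z_d)\neq 0$ (from Lemma \ref{Lemma OP}$(ii)$) with the unique continuation property for Schr\"odinger equations from positive-measure time sets; the paper cites Lemmas 4.1 and 4.3 of \cite{4} for exactly this, so your appeal to ``the mechanism behind Proposition 4.4 in \cite{4}'' is the right ingredient, just attributed less precisely. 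Where you genuinely diverge is part $(iii)$. The paper notes that $\frac{u_1+u_2}{2}$ is again optimal, invokes the bang-bang property $(ii)$ for all three controls, and applies the parallelogram law \emph{pointwise in time to the controls themselves} to conclude $u_1=u_2$ a.e. You instead apply the parallelogram identity once, \emph{to the terminal states}: convexity plus optimality of the midpoint forces $y_1(T)=y_2(T)$, hence a common adjoint $\varphi^*$, and then the representation (\ref{3.7}) from part $(i)$ identifies the two controls. Both arguments are short and correct; yours has the mild advantage of isolating a reusable intermediate fact (uniqueness of the optimal terminal state, which holds whenever the terminal cost is a norm and the admissible set is convex, independently of bang-bang), while the paper's is more self-contained in that it never re-enters the adjoint representation after (ii). Your converse direction in $(i)$ is also spelled out (via Lemma \ref{Lemma OP}$(iii)$), which the paper leaves implicit.
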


\begin{proof}
 $(i)$ One can easily check that  (\ref{3.1}) is equivalent to
  \begin{equation}\label{3.8}
   Re\langle \chi_{\omega}\varphi^*(t),u^*(t) \rangle = \displaystyle\max_{v\in L^2(\Omega),\|v\| \leq M} Re\langle \chi_{\omega}\varphi^*(t),v \rangle  \mbox{ for a.e. }  t\in (\tau,T).
  \end{equation}
 For each $v\in L^2(\Omega)$ with $\|v\| \leq M$, we set  $v_1= v\cdot sgn(\langle \chi_{\omega}\varphi^*(t), v \rangle)$. Clearly, $\|v_1\| \leq M$. We replace $v$ by $v_1$ in
 (\ref{3.8}) to get
   \begin{equation*}
   Re\langle \chi_{\omega}\varphi^*(t),u^*(t) \rangle = \displaystyle\max_{v\in L^2(\Omega),\|v\| \leq M} |\langle \chi_{\omega}\varphi^*(t),v \rangle|  \mbox{ for a.e. }  t\in (\tau,T).
  \end{equation*}
By the above identity and  the fact that $\|u^*(t)\| \leq M$ for a.e. $t\in (\tau,T)$, we can find that $Im \langle \chi_{\omega}\varphi^*(t),u^*(t) \rangle =0$ and
  \begin{equation}\label{3.9}
   \langle \chi_{\omega}\varphi^*(t),u^*(t) \rangle = \displaystyle\max_{v\in L^2(\Omega),\|v\| \leq M} |\langle \chi_{\omega}\varphi^*(t),v \rangle|  \mbox{ for a.e. }  t\in (\tau,T).
  \end{equation}
  On the other hand, because $\varphi^*(T)= -(y^*(T)-z_d)\neq 0$ (see $(ii)$ in Lemma \ref{Lemma OP}) and the unique continuation of Sch\"{o}dinger equations (see Lemma 4.1 and Lemma 4.3 in \cite{4}), we get
  \begin{equation*}
   \chi_\omega \varphi^*(t)\neq 0  \mbox{ for a.e. }  t\in (\tau,T).
  \end{equation*}
  From this and (\ref{3.9}), one can easily deduce that
  \begin{equation*}
    u^*(t) = M\frac{\chi_\omega \varphi^*(t)}{\|\chi_\omega \varphi^*(t)\|}  \mbox{ for a.e. }  t\in (\tau,T).
  \end{equation*}

 $(ii)$ follows from  $(i)$ at once.

 $(iii)$ Let $u_1$ and $u_2$ be two optimal controls to $(OP)^{M,\tau}$. Clearly,  the control $\frac{u_1+u_2}{2}$ is also an optimal control to this problem.  From the bang-bang property of $(OP)^{M,\tau}$, we have
 \begin{equation}\label{3.10}
  \|u_1(t)\| = \|u_2(t)\| = \left\|\frac{u_1(t)+u_2(t)}{2}\right\| = M \mbox{ for a.e. }  t\in (\tau,T).
 \end{equation}
 According to the parallelogram law and (\ref{3.10}),  it holds that
 \begin{eqnarray*}
  \left\|\frac{u_1(t)-u_2(t)}{2}\right\|^2 &=& \frac{1}{2} (\|u_1(t)\|^2 + \|u_2(t)\|^2) - \left\|\frac{u_1(t)+u_2(t)}{2}\right\|^2 \\
                              &\leq & \frac{1}{2} (M^2 + M^2) - M^2 =0 \;\;\mbox{ for a.e. }\; t\in (\tau,T).
 \end{eqnarray*}
 So we have $u_1(t)=u_2(t)$ for a.e. $t\in (\tau,T)$. This completes the proof.
\end{proof}

Next, we study the existence of optimal control to the  problem $(NP)^{\tau,r}$.
\begin{Lemma}\label{Np}
 Let $\tau\in [0,T)$ and $r\in [0,\infty)$. Then the optimal norm problem $(NP)^{\tau,r}$ has optimal controls and  it holds that $M(\tau,r) \leq M^\tau$. Moreover, $M(\tau,r)>0$ when $r<r_T$.
\end{Lemma}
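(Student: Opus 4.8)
The plan is to dispatch the three assertions in increasing order of effort: first the upper bound $M(\tau,r)\leq M^\tau$ (which simultaneously certifies $\mathcal W_{\tau,r}\neq\emptyset$), then the existence of an optimal control by the direct method, and finally the strict positivity.

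First I would observe that $z_d\in B(z_d,r)$ for every $r\geq 0$, so any control driving $y_0$ exactly to $z_d$ on $(\tau,T)$ already lies in $\mathcal W_{\tau,r}$; that is, $\mathcal S^\tau\subseteq\mathcal W_{\tau,r}$. Since $\mathcal S^\tau$ is non-empty by the $L^\infty$-exact controllability of Schr\"{o}dinger equations, so is $\mathcal W_{\tau,r}$, and taking infima over the nested sets yields $M(\tau,r)\leq M^\tau$ at once.

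Next, for existence I would argue by weak-star compactness. Choose a minimizing sequence $\{u_n\}\subset\mathcal W_{\tau,r}$ with $\|\chi_{(\tau,T)}u_n\|_\infty\to M(\tau,r)$; replacing $u_n$ by $\chi_{(\tau,T)}u_n$ changes neither the cost nor the terminal state, so I may assume $u_n=\chi_{(\tau,T)}u_n$. Boundedness in $L^\infty((0,T);L^2(\Omega))$ gives a subsequence $\{u_{n_k}\}$ converging weakly star to some $\tilde u$; since the functions supported in $(\tau,T)$ form a weak-star closed subspace, $\tilde u=\chi_{(\tau,T)}\tilde u$, and by weak-star lower semicontinuity of the norm $\|\tilde u\|_\infty\leq\liminf_k\|u_{n_k}\|_\infty=M(\tau,r)$. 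The terminal-state map $u\mapsto y(T;\chi_{(\tau,T)}u,y_0)$ carries this weak-star convergence to weak convergence in $L^2(\Omega)$, exactly as used in the proof of Lemma~\ref{Lemma OP}, so $y(T;\chi_{(\tau,T)}u_{n_k},y_0)\to y(T;\chi_{(\tau,T)}\tilde u,y_0)$ weakly. Because $B(z_d,r)$ is closed and convex, hence weakly closed, the limit stays in $B(z_d,r)$; therefore $\tilde u\in\mathcal W_{\tau,r}$ and $\|\chi_{(\tau,T)}\tilde u\|_\infty=M(\tau,r)$, i.e. $\tilde u$ is an optimal control.

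Finally, for strict positivity when $r<r_T$ I would argue by contradiction using the optimal control just produced. If $M(\tau,r)=0$, then $\chi_{(\tau,T)}\tilde u=0$ a.e., whence $y(T;\chi_{(\tau,T)}\tilde u,y_0)=y(T;0,y_0)$, and optimality forces $y(T;0,y_0)\in B(z_d,r)$, i.e. $r_T=\|y(T;0,y_0)-z_d\|\leq r$, contradicting $r<r_T$. I expect no genuine obstacle in this lemma; the only point requiring care is the limit passage in the existence step, namely the weak-star-to-weak continuity of the terminal-state map combined with the weak closedness of the target ball $B(z_d,r)$, and this is precisely the mechanism already exploited earlier in the paper.
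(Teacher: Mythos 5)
Your proposal is correct and follows essentially the same route as the paper's proof: the direct method (minimizing sequence, weak-star compactness, lower semicontinuity of the norm, weak convergence of the terminal state into the weakly closed ball $B(z_d,r)$) for existence, and the identical contradiction argument for $M(\tau,r)>0$ when $r<r_T$. The only minor difference is the bound $M(\tau,r)\leq M^\tau$: you obtain it directly from the inclusion $\mathcal S^\tau\subseteq\mathcal W_{\tau,r}$ and monotonicity of infima, whereas the paper invokes an optimal control to $(NP)^{\tau,0}$ attaining $M^\tau$; your version is marginally more economical since it requires no attainment.
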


\begin{proof}
We first notice  the non-emptiness of   the admissible control set $\mathcal W_{\tau,r}$, since  the $L^\infty$-null controllability for Sch\"{o}dinger equations holds under the assumption that $\omega$ satisfies the geometric control condition.

Next, according to the definition of  $M(\tau,r)$ to $(NP)^{\tau,r}$, we have
 \begin{equation*}
  M(\tau,r) = \displaystyle\inf_{u\in\mathcal W_{\tau,r}} \{\|\chi_{(\tau,T)}u\|_{\infty}; y(T;\chi_{(\tau,T)}u,y_0)\in B(z_d,r)\}.
 \end{equation*}
 From this, there is a sequence $\{u_n\}\subset \mathcal W_{\tau,r}$ such that
 \begin{equation}\label{zhang ab}
  \|y(T;\chi_{(\tau,T)}u_n,y_0)-z_d\| \leq r \mbox{ and } \displaystyle\lim_{n\rightarrow\infty} \|\chi_{(\tau,T)}u_n\|_{\infty} = M(\tau,r).
 \end{equation}
 Then  there exist a subsequence $\{u_{n_k}\}\subset\{u_n\}$ and $\tilde u\in L^{\infty}((0,T);L^2(\Omega))$ such that
 \begin{equation}\label{3.11}
  \chi_{(\tau,T)}u_{n_k} \longrightarrow \tilde u \mbox{ weakly star in} ~L^{\infty}((0,T);L^2(\Omega))
 \end{equation}
 and
 \begin{equation}\label{3.12}
  \|\tilde u\|_{\infty} \leq \displaystyle\liminf_{k\rightarrow\infty} \|\chi_{(\tau,T)}u_{n_k}\|_{\infty} \leq M(\tau,r).
 \end{equation}
 By (\ref{3.11}), we have
 \begin{equation*}
  y(T;\chi_{(\tau,T)}u_{n_k},y_0) \rightarrow y(T;\chi_{(\tau,T)}\tilde u,y_0) \mbox{ weakly in } L^2(\Omega).
 \end{equation*}
 From this and (\ref{zhang ab}), we have
 \begin{equation*}
   \|y(T;\chi_{(\tau,T)}\tilde u,y_0) - z_d\| \leq \displaystyle\liminf_{k\rightarrow\infty} \|y(T;\chi_{(\tau,T)}u_{n_k},y_0) - z_d\| \leq r.
 \end{equation*}
 This, together with (\ref{3.12}), indicates that $\tilde u$ is an optimal control to $(NP)^{\tau,r}$.

 Furthermore, by the definition of $M^\tau$ and the existence of optimal controls to $(NP)^{\tau,0}$, there is a control $\hat u\in L^{\infty}((0,T);L^2(\Omega))$ such that
 \begin{equation}\label{zhang3.13}
  y(T;\chi_{(\tau,T)}\hat u,y_0) = z_d \mbox{ and } \|\chi_{(\tau,T)}\hat u\|_\infty = M^\tau.
 \end{equation}
 From the first equality above, we know that  $\hat u\in \mathcal W_{\tau,r}$. This, along with (\ref{zhang3.13}) and  the optimality of $M(\tau,r)$, implies that $M(\tau,r) \leq M^\tau$.

 Finally, we are going to show that $M(\tau,r)>0$ when $r<r_T$.
 Seeking for a contradiction, we could suppose that $M(\tau,r)=0$ for some $r<r_T$. Then the optimal control   $u_\tau$ to $(NP)^{\tau,r}$ would satisfy that $u_\tau=0$ and $\|y(T;\chi_{(\tau,T)}u_\tau, y_0)-z_d\|\leq r$. Thus, it holds that  $\|y(T;\chi_{(\tau,T)}u_\tau,y_0)-z_d\|=\|y(T;0,y_0)-z_d\|=r_T>r$ (see (\ref{wang1.4})), which leads to a contradiction.
\end{proof}

\begin{Remark}
In fact, when $r > 0$, the conclusion that $M(\tau,r) \leq M^\tau$ in Lemma~\ref{Np}  can be improved to that $M(\tau,r) < M^\tau$.
This will be proved in  Lemma \ref{Lemma 3.5} and will be used later.
\end{Remark}

\begin{Lemma}\label{Lemma 3.5}
 Let $\tau\in [0,T)$. The map $r(\cdot,\tau): [0,M^\tau)\longrightarrow (0,r_T]$ is strictly monotonically decreasing and Lipschitz continuous and verifies that $r(0,\tau)=r_T$ and $\displaystyle\lim_{M\rightarrow M^\tau} r(M,\tau) = 0$. Then $M(\tau,r)< M^\tau$ when $r >0$. Besides, it holds that
 \begin{equation}\label{3.13}
  r=r(M(\tau,r),\tau) \mbox{ when } r\in (0,r_T]
 \end{equation}
 and
 \begin{equation}\label{3.14}
  M=M(\tau,r(M,\tau)) \mbox{ when } M\in [0,M^\tau).
 \end{equation}
 Consequently, the inverse of the map $r(\cdot,\tau)$ is the map $M(\tau,\cdot)$.
\end{Lemma}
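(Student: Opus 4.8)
The plan is to reduce every assertion to the affine structure of the final state. Writing $w_0 = y(T;0,y_0)-z_d$, so that $\|w_0\|=r_T$, and introducing the bounded linear control-to-state map $L_\tau u = \int_\tau^T e^{-i\Delta(T-s)}\chi_\omega u(s)\,ds$, Duhamel's formula gives $y(T;\chi_{(\tau,T)}u,y_0)-z_d = w_0 + L_\tau u$, whence $r(M,\tau)=\inf\{\|w_0+L_\tau u\|;\ \|\chi_{(\tau,T)}u\|_\infty\le M\}$. Because the Schr\"odinger group is unitary and $\|\chi_\omega\,\cdot\,\|\le\|\cdot\|$, I record at the outset the a priori bound $\|L_\tau u\|\le (T-\tau)\|\chi_{(\tau,T)}u\|_\infty$, which will drive the continuity estimate. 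The endpoint value $r(0,\tau)=r_T$ is immediate, since $\mathcal U_{0,\tau}$ forces $u\equiv 0$ on $(\tau,T)$.

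For monotonicity I would first note $\mathcal U_{M_1,\tau}\subset\mathcal U_{M_2,\tau}$ when $M_1<M_2$, which already yields $r(M_2,\tau)\le r(M_1,\tau)$; strictness I would extract from the bang-bang property of Lemma~\ref{Lemma 3.3}: if the two optimal distances coincided, the optimal control of $(OP)^{M_1,\tau}$, whose norm equals $M_1$ a.e.\ on $(\tau,T)$, would also be optimal for $(OP)^{M_2,\tau}$ and hence have norm $M_2$ a.e., a contradiction. The two boundary behaviours I would obtain from one scaling trick: taking an optimal control $u^\ast$ of $(NP)^{\tau,0}$ (so $L_\tau u^\ast=-w_0$ and $\|\chi_{(\tau,T)}u^\ast\|_\infty=M^\tau$, available by Lemma~\ref{Np}), the admissible control $\frac{M}{M^\tau}u^\ast$ gives $r(M,\tau)\le(1-\frac{M}{M^\tau})r_T$. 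This both proves $\lim_{M\to M^\tau}r(M,\tau)=0$ and, since $\frac{M}{M^\tau}u^\ast$ reaches $B(z_d,r)$ once $(1-\frac{M}{M^\tau})r_T\le r$, shows $M(\tau,r)\le M^\tau(1-\frac{r}{r_T})<M^\tau$ for $0<r\le r_T$ (the case $r>r_T$ being trivial).

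The step I expect to be the main obstacle is the Lipschitz estimate, because the naive scaling bound degenerates near $M=0$. My plan is: for $M_1<M_2$, scale an optimal control $u_2$ of $(OP)^{M_2,\tau}$ down by $M_1/M_2$ and use the triangle inequality on $w_0+\frac{M_1}{M_2}L_\tau u_2=\frac{M_1}{M_2}(w_0+L_\tau u_2)+(1-\frac{M_1}{M_2})w_0$ to obtain $r(M_1,\tau)-r(M_2,\tau)\le(1-\frac{M_1}{M_2})(r_T-r(M_2,\tau))$. The factor $\frac{1}{M_2}$ hidden here blows up as $M_2\to 0$, so I would neutralize it with the a priori estimate $r_T-r(M,\tau)\le(T-\tau)M$ coming from $\|L_\tau u\|\le(T-\tau)M$; this collapses the bound to the clean, uniform Lipschitz constant $T-\tau$ on all of $[0,M^\tau)$.

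Finally, the two inversion identities I would prove by optimality and bang-bang. For (\ref{3.13}), admissibility of the optimal control of $(NP)^{\tau,r}$ gives $r(M(\tau,r),\tau)\le r$; were the inequality strict, scaling the optimal control of $(OP)^{M(\tau,r),\tau}$ slightly toward $0$ would keep the final state inside $B(z_d,r)$ while lowering its norm below $M(\tau,r)$, contradicting the minimality defining $M(\tau,r)$. For (\ref{3.14}), admissibility gives $M(\tau,r(M,\tau))\le M$; if it were strict, the optimal control of $(NP)^{\tau,r(M,\tau)}$ would realize the optimal distance $r(M,\tau)$ for $(OP)^{M,\tau}$ yet have norm strictly below $M$, hence be an optimal control of $(OP)^{M,\tau}$ failing $\|u(t)\|=M$ a.e., contradicting the bang-bang property of Lemma~\ref{Lemma 3.3}. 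The strict monotonicity and continuity established above, together with (\ref{3.13})--(\ref{3.14}), then identify $M(\tau,\cdot):(0,r_T]\to[0,M^\tau)$ as the inverse of $r(\cdot,\tau)$.
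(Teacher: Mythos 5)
Your proof is correct. It shares the paper's skeleton --- strict monotonicity via the bang-bang property of Lemma \ref{Lemma 3.3}, Lipschitz continuity and the limit at $M^\tau$ via scaling of controls, and the inversion identities via optimality comparisons --- but the second half follows a genuinely different route. Where the paper proves $M(\tau,r)<M^\tau$ and (\ref{3.13}) by intermediate-value arguments (using the continuity and strict monotonicity of $r(\cdot,\tau)$, already established, to manufacture a level $M_0$ with $r(M_0,\tau)=\min(r,r_T)$, resp.\ a level $M_1<M(\tau,r)$ with $r(M_1,\tau)=r$, whose optimal control then contradicts the minimality of $M(\tau,r)$), and then deduces (\ref{3.14}) from (\ref{3.13}) by injectivity of $r(\cdot,\tau)$, you work directly with your affine representation $y(T;\chi_{(\tau,T)}u,y_0)-z_d=w_0+L_\tau u$: scaling the exact control $u^*$ of $(NP)^{\tau,0}$ gives the quantitative bounds $r(M,\tau)\le\bigl(1-\frac{M}{M^\tau}\bigr)r_T$ and $M(\tau,r)\le M^\tau\bigl(1-\frac{r}{r_T}\bigr)$, which are sharper than what the paper records; for (\ref{3.13}) you contract the optimal control of $(OP)^{M(\tau,r),\tau}$ toward zero, convexity keeping the final state in $B(z_d,r)$ while strictly lowering the norm (the needed fact $M(\tau,r)>0$ is automatic in your contradiction setup, since $r(M(\tau,r),\tau)<r\le r_T=r(0,\tau)$ forces $M(\tau,r)\neq 0$); and for (\ref{3.14}) you obtain the contradiction directly from the bang-bang property rather than by composing (\ref{3.13}) with injectivity. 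Both routes are valid; yours avoids any appeal to the intermediate value theorem in these steps and produces explicit rates, while the paper's grouping in the Lipschitz step, $\bigl(w_0+\frac{M_1}{M_2}L_\tau u_2\bigr)+\frac{M_2-M_1}{M_2}L_\tau u_2$, yields the constant $T-\tau$ in one line, without the auxiliary estimate $r_T-r(M,\tau)\le(T-\tau)M$ that your convex-combination grouping requires. The monotonicity step and the treatment of $r(0,\tau)=r_T$ coincide with the paper's.
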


\begin{proof}
We carry out the proof by several steps.

{\it  Step 1. The map $r(\cdot,\tau)$ is strictly monotonically decreasing.}

 Let $0\leq M_1 < M_2 < M^\tau$. We claim that $r(M_1,\tau) > r(M_2,\tau)$. Otherwise,  we would have that $r(M_1,\tau) \leq r(M_2,\tau)$. Then by the Lemma \ref{Lemma OP}, there is an optimal control $u^*\in\mathcal U_{M_1,\tau}$ to $(OP)^{M_1,\tau}$ such that
 \begin{equation*}
  \|y(T;\chi_{(\tau,T)}u^*,y_0) - z_d\| =r(M_1,\tau)\leq r(M_2,\tau) \mbox{ and } \|\chi_{(\tau,T)}u^*\|_\infty \leq M_1 < M_2.
 \end{equation*}
 So $u^*$ is also an optimal control to $(OP)^{M_2,\tau}$. On the other hand, by the bang-bang property of $(OP)^{M_2,\tau}$ (see $(ii)$ in Lemma \ref{Lemma 3.3}), it holds that $\|\chi_{(\tau,T)}u^*(t)\| = M_2$ for a.e. $t\in(\tau,T)$.
  This contradicts to the facts that  $u^*\in\mathcal U_{M_1,\tau}$ and $M_2>M_1$.

 {\it Step 2. The map $r(\cdot,\tau)$ is Lipschitz continuous.}

 Let $0\leq M_1 < M_2 < M^\tau$. Let  $\hat u\in\mathcal U_{M_2,\tau}$ be the optimal control to $(OP)^{M_2,\tau}$. Then  we have
 \begin{eqnarray*}
  &\;&r(M_2,\tau) = \left\|e^{-i\Delta T}y_0 + \int^T_\tau e^{-i\Delta (T-t)}\chi_\omega \hat u dt - z_d \right\|  \\
  &\geq & \left\|e^{-i\Delta T}y_0 + \int^T_\tau e^{-i\Delta (T-t)}\chi_\omega \frac{M_1}{M_2}\hat u dt - z_d \right\|  -  \frac{M_2-M_1}{M_2}\left\|\int^T_\tau e^{-i\Delta (T-t)}\chi_\omega \hat u dt \right\|.
 \end{eqnarray*}
It follows from the fact that $\frac{M_1}{M_2}\hat u\in\mathcal U_{M_1,\tau}$ and the optimality of $r(M_1,\tau)$ that
 \begin{equation*}
  \left\|e^{-i\Delta T}y_0 + \int^T_\tau e^{-i\Delta (T-t)}\chi_\omega \frac{M_1}{M_2}\hat u dt - z_d \right\| \geq r(M_1,\tau).
 \end{equation*}
It is clear that
  $$\left\|\int^T_\tau e^{-i\Delta (T-t)}\chi_\omega \hat u dt \right\| \leq M_2(T-\tau).$$
 Combining the above three estimates, we obtain that
 \begin{equation*}
  r(M_2,\tau) > r(M_1,\tau) - (M_2-M_1)(T-\tau).
 \end{equation*}
 This, together with the fact that  $r(M_2,\tau) < r(M_1,\tau)$ (see Step 1), indicates that
 \begin{equation*}
  |r(M_1,\tau) - r(M_2,\tau)| \leq (T-\tau)|M_1-M_2|.
 \end{equation*}

{\it  Step 3. $r(0,\tau)=r_T$ and $\displaystyle\lim_{M\rightarrow M^\tau} r(M,\tau)=0$.}

 First, according to the definition of $r_T$ (see (\ref{wang1.4})), it is obvious that $r(0,\tau)=r_T$.

 Next, let $0\leq M < M^\tau$. By using the definition of $M^\tau$ and Lemma \ref{Np}, we see that  there is a control $\bar u\in L^{\infty}((0,T);L^2(\Omega))$ such that
 \begin{equation*}
  y(T;\chi_{(\tau,T)}\bar u,y_0) = z_d \mbox{ and } \|\chi_{(\tau,T)}\bar u\|_\infty = M^\tau.
 \end{equation*}
 Since $\frac{M}{M^\tau}\bar u\in \mathcal U_{M,\tau}$, by the definition of $r(M,\tau)$, we get
 \begin{eqnarray*}
 &\;& r(M,\tau) \leq \|y(T;\chi_{(\tau,T)} \frac{M}{M^\tau}\bar u,y_0) - z_d\| \\
 &\leq & \left\|\left(e^{-i\Delta T}y_0 + \int^T_\tau e^{-i\Delta (T-t)}\chi_\omega \frac{M}{M^\tau}\bar u dt \right) - \left(e^{-i\Delta T}y_0 + \int^T_\tau e^{-i\Delta (T-t)}\chi_\omega \bar u dt \right) \right\|  \\
 &\leq & \frac{|M-M^\tau|}{M^\tau} (M^\tau |T-\tau|) \leq |T-\tau| |M-M^\tau|.
 \end{eqnarray*}
 So it holds that $\displaystyle\lim_{M\rightarrow M^\tau} r(M,\tau)=0$.

 {\it Step 4. It holds that $M(\tau,r)< M^\tau$ when $r >0$.}

  Let $r > 0$ and $r^0 \triangleq \min(r,r_T)$. According to the strictly monotonicity and continuity of the map $r(\cdot,\tau)$ and the fact that $r^0 \in (0,r_T]$, we can find a number $M_0\in [0,M^\tau)$ and an optimal control $u^0\in\mathcal U_{M_0,\tau}$ to $(OP)^{M_0,\tau}$ such that $r^0 = r(M_0,\tau)$ and
 \begin{equation}\label{wang3.18}
  \|y(T;\chi_{(\tau,T)}u^0,y_0) - z_d\| = r(M_0,\tau) = r^0 \leq r.
 \end{equation}
From the above, we see that the control $u^0\in \mathcal W_{\tau,r}\bigcap\mathcal U_{M_0,\tau}$. This, along with the optimality of $M(\tau,r)$, yields  that
 \begin{equation*}
  M(\tau,r) \leq \|\chi_{(\tau,T)}u^0\|_\infty \leq M_0 < M^\tau \mbox{ for } r > 0.
 \end{equation*}

 {\it Step 5. It holds that $r = r(M(\tau,r),\tau),~ r\in (0,r_T]$.}

 Let $0 < r \leq r_T$. An optimal control  $u_1$  to $(NP)^{\tau,r}$ verifies that
 \begin{equation*}
  \|y(T;\chi_{(\tau,T)}u_1,y_0) - z_d\| \leq r  \mbox{ and } \|\chi_{(\tau,T)}u_1\|_\infty = M(\tau,r).
 \end{equation*}
 From these and the definition of $r(M(\tau,r),\tau)$, we can deduce that $r\geq r(M(\tau,r),\tau)$. Then we claim that $r = r(M(\tau,r),\tau)$. Otherwise, we would get that $r > r(M(\tau,r),\tau)$. Using the fact that $M(\tau,r)< M^\tau$ (see Step 4) and the fact that the map $r(\cdot,\tau)$ is strictly monotonically decreasing and continuous when $M\in[0,M^\tau)$, we can find a number $M_1\in [0,M^\tau)$ such that $r = r(M_1,\tau)$ with $M_1 < M(\tau,r)$. Then there is an optimal control $u_2$ to $(OP)^{M_1,\tau}$ such that
 \begin{equation*}
  \|y(T;\chi_{(\tau,T)}u_2,y_0) - z_d\| = r(M_1,\tau) =r  \mbox{ and } \|\chi_{(\tau,T)}u_2\|_\infty \leq M_1 < M(\tau,r).
 \end{equation*}
 This contradicts to the optimality of $M(\tau,r)$.

 {\it Step 6. $M=M(\tau,r(M,\tau)), ~ 0\leq M < M^\tau$.}

 Let $0\leq M < M^\tau$. Then it holds that  $r(M,\tau) > 0$ (see $(ii)$ in Lemma \ref{Lemma OP}) and $M(\tau,r(M,\tau))<M^\tau$ (see Step 4). Set $r=r(M,\tau)$.
 Since $r=r(M(\tau,r),\tau)$ with $r\in (0,r_T]$ (see Step 5), we have
 \begin{equation*}
  r(M,\tau) = r(M(\tau,r(M,\tau)),\tau)  \mbox{ for every } M\in[0,M^\tau).
 \end{equation*}
 This, combined with the strictly monotonicity of $r(\cdot,\tau)$, implies that $M = M(\tau,r(M,\tau))$ for every $M\in[0,M^\tau)$.
\end{proof}

\begin{Proposition}\label{OPNP}
 $(i)$ When $\tau\in [0,T)$ and $0\leq M < M_\tau$, the optimal control to $(OP)^{M,\tau}$ is also an optimal control to $(NP)^{\tau,r(M,\tau)}$; $(ii)$ When $\tau\in [0,T)$ and $r\in (0,r_T]$, the optimal control to $(NP)^{\tau,r}$ is also an optimal control to $(OP)^{M(\tau,r),\tau}$; $(iii)$ When $\tau\in [0,T)$ and $r\in (0,r_T]$, $(NP)^{\tau,r}$ holds the bang-bang property and has the unique optimal control.
\end{Proposition}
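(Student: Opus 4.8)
The plan is to exploit the inverse relationship between the distance map $r(\cdot,\tau)$ and the norm map $M(\tau,\cdot)$ established in Lemma \ref{Lemma 3.5}, together with the bang-bang property and uniqueness from Lemma \ref{Lemma 3.3}. Once these are in hand, each assertion reduces to checking admissibility in the appropriate direction and comparing the two infima; the substantive work has already been done in the preceding lemmas.

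For $(i)$ I would fix $\tau\in[0,T)$ and $0\le M<M^\tau$ and let $u^*$ denote the optimal control to $(OP)^{M,\tau}$ furnished by Lemma \ref{Lemma OP}$(i)$. By the bang-bang property (Lemma \ref{Lemma 3.3}$(ii)$) one has $\|\chi_{(\tau,T)}u^*\|_\infty=M$, while $\|y(T;\chi_{(\tau,T)}u^*,y_0)-z_d\|=r(M,\tau)$, so that $u^*\in\mathcal W_{\tau,r(M,\tau)}$ and is admissible for $(NP)^{\tau,r(M,\tau)}$. The definition of $M(\tau,r(M,\tau))$ then gives $M(\tau,r(M,\tau))\le\|\chi_{(\tau,T)}u^*\|_\infty=M$, and the identity (\ref{3.14}) forces equality; hence $u^*$ attains the infimum defining $(NP)^{\tau,r(M,\tau)}$ and is optimal for it.

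For $(ii)$ I would fix $\tau\in[0,T)$ and $r\in(0,r_T]$ and let $u_1$ be an optimal control to $(NP)^{\tau,r}$ provided by Lemma \ref{Np}, so that $\|\chi_{(\tau,T)}u_1\|_\infty=M(\tau,r)$ and $\|y(T;\chi_{(\tau,T)}u_1,y_0)-z_d\|\le r$; in particular $u_1\in\mathcal U_{M(\tau,r),\tau}$ and is admissible for $(OP)^{M(\tau,r),\tau}$. The identity (\ref{3.13}) gives $r=r(M(\tau,r),\tau)$, and combining this with the defining infimum of $r(M(\tau,r),\tau)$ yields $r(M(\tau,r),\tau)\le\|y(T;\chi_{(\tau,T)}u_1,y_0)-z_d\|\le r=r(M(\tau,r),\tau)$; thus $u_1$ attains the distance $r(M(\tau,r),\tau)$ and is optimal for $(OP)^{M(\tau,r),\tau}$.

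Finally, $(iii)$ would follow from $(ii)$. Since $r\in(0,r_T]$ gives $M(\tau,r)<M^\tau$ by Lemma \ref{Lemma 3.5}, the problem $(OP)^{M(\tau,r),\tau}$ enjoys the bang-bang property and uniqueness of Lemma \ref{Lemma 3.3}. Any optimal control to $(NP)^{\tau,r}$ is, by $(ii)$, optimal for $(OP)^{M(\tau,r),\tau}$, so it satisfies $\|u(t)\|=M(\tau,r)$ for a.e.\ $t\in(\tau,T)$, which is exactly the bang-bang property of $(NP)^{\tau,r}$; and since any two such controls are both optimal for $(OP)^{M(\tau,r),\tau}$, they agree a.e.\ on $(\tau,T)$, giving uniqueness. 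I do not anticipate a genuine difficulty here: the only points requiring care are verifying admissibility in each direction and invoking the strict inequality $M(\tau,r)<M^\tau$, so that the bang-bang and uniqueness conclusions of Lemma \ref{Lemma 3.3}, stated only for $M<M^\tau$, are legitimately available.
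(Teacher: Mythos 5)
Your proposal is correct and follows essentially the same route as the paper: admissibility in the opposite problem plus the inverse identities (\ref{3.13})--(\ref{3.14}) from Lemma \ref{Lemma 3.5} for parts $(i)$ and $(ii)$, and deducing $(iii)$ from $(ii)$ together with Lemma \ref{Lemma 3.3}. Your only deviations are cosmetic and sound: in $(i)$ you invoke the bang-bang property to get $\|\chi_{(\tau,T)}u^*\|_\infty=M$ exactly (the paper gets by with $\leq M$ and the infimum definition), and in $(iii)$ you explicitly verify $M(\tau,r)<M^\tau$ via Lemma \ref{Lemma 3.5}, a hypothesis of Lemma \ref{Lemma 3.3} that the paper leaves implicit.
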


\begin{proof}
 $(i)$ The optimal control $u_1$  to $(OP)^{M,\tau}$ satisfies that  $\|y(T;\chi_{(\tau,T)}u_1,y_0) - z_d\| = r(M,\tau)$ and $\|\chi_{(\tau,T)}u_1\|_\infty \leq M$.
 From these and (\ref{3.14}),  $u_1$ is also an optimal control to $(NP)^{\tau,r(M,\tau)}$; $(ii)$ The optimal control  $u_2$  to $(NP)^{\tau,r}$ verifies that  $\|y(T;\chi_{(\tau,T)}u_2,y_0) - z_d\| \leq r$ and $\|\chi_{(\tau,T)}u_2\|_\infty = M(\tau,r)$. From these and (\ref{3.13}),  $u_2$ is also an optimal control to $(OP)^{M(\tau,r),\tau}$;
 $(iii)$ follows from  $(ii)$ and Lemma \ref{Lemma 3.3}.
\end{proof}

\bigskip
\subsection{Equivalence between the optimal time and norm problems}

$~~~$ The following lemma guarantees the existence of optimal control to $(TP)^{M,r}$.

\begin{Lemma}\label{TP}
 Let $r\in (0,r_T)$ and $M\in [M(0,r),\infty)$. Then $(TP)^{M,r}$ has optimal controls. Moreover, it holds that $\tau(M,r)<T$.
\end{Lemma}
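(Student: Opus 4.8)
The plan is to establish, in order, that the admissible set $\mathcal V_{M,r}$ is nonempty so that $(TP)^{M,r}$ is well posed, that the strict bound $\tau(M,r)<T$ holds, and finally that the supremum defining $\tau(M,r)$ is attained. For the nonemptiness I would invoke the hypothesis $M\geq M(0,r)$ together with Lemma~\ref{Np} at $\tau=0$: there is an optimal control $u_0$ to $(NP)^{0,r}$ with $\|\chi_{(0,T)}u_0\|_\infty=M(0,r)\leq M$ and $y(T;\chi_{(0,T)}u_0,y_0)\in B(z_d,r)$, so $u_0\in\mathcal U_{M,0}$ and hence $u_0\in\mathcal V_{M,r}$.

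The crux of the strict bound is an a priori estimate exploiting the unitarity of $e^{-i\Delta t}$ together with $\|\chi_\omega\|\leq 1$. For any $u\in\mathcal V_{M,r}$ with witness $\tau_0$, and any $\tau$ with $\tau_0\leq\tau<\tau_{M,r}(u)$ for which the target is hit, I would use the representation $y(T;\chi_{(\tau,T)}u,y_0)=y(T;0,y_0)+\int_\tau^T e^{-i\Delta(T-t)}\chi_\omega u(t)\,dt$, so that the reverse triangle inequality and $\|u(t)\|\leq M$ on $(\tau,T)$ give
\[
 r\geq\|y(T;0,y_0)-z_d\|-\int_\tau^T\|u(t)\|\,dt\geq r_T-M(T-\tau).
\]
Thus $\tau\leq T-(r_T-r)/M$; letting $\tau\uparrow\tau_{M,r}(u)$ and taking the supremum over $u\in\mathcal V_{M,r}$ yields $\tau(M,r)\leq T-(r_T-r)/M<T$, where $r_T-r>0$ follows from $r<r_T$.

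For existence I would take a maximizing sequence $u_n\in\mathcal V_{M,r}$ with $s_n\triangleq\tau_{M,r}(u_n)\to\tau(M,r)\triangleq\tau^*$. Since $\tau\mapsto y(T;\chi_{(\tau,T)}u_n,y_0)$ is continuous and $s_n<T$, the target is attained at $s_n$, so $w_n\triangleq\chi_{(s_n,T)}u_n$ satisfies $\|w_n\|_\infty\leq M$, $w_n=0$ on $(0,s_n)$, and $y(T;w_n,y_0)\in B(z_d,r)$. Extracting a weak-star convergent subsequence $w_{n_k}\rightharpoonup w^*$ in $L^\infty((0,T);L^2(\Omega))$, I get $\|w^*\|_\infty\leq M$; moreover, since $s_n\to\tau^*$, for each fixed $\tau<\tau^*$ one has $s_{n_k}>\tau$ for large $k$, so testing against functions supported in $(0,\tau)$ forces $w^*=0$ on $(0,\tau)$, whence $w^*$ is supported in $(\tau^*,T)$. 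The map $w\mapsto\int_0^T e^{-i\Delta(T-t)}\chi_\omega w(t)\,dt$ is weak-star continuous into $L^2(\Omega)$ (pairing against $g\in L^2(\Omega)$ amounts to integrating $w$ against $t\mapsto\chi_\omega e^{i\Delta(T-t)}g\in L^1((0,T);L^2(\Omega))$), so $y(T;w_{n_k},y_0)\rightharpoonup y(T;w^*,y_0)$, and weak lower semicontinuity of the norm gives $\|y(T;w^*,y_0)-z_d\|\leq r$. Setting $u^*=w^*\in\mathcal U_{M,\tau^*}$ produces $u^*\in\mathcal V_{M,r}$ with $\tau_{M,r}(u^*)\geq\tau^*=\tau(M,r)$, and the reverse inequality is automatic, so $u^*$ is optimal.

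I expect the main obstacle to be the existence step, specifically the localization of the weak-star limit: one must argue that the shrinking supports $(s_n,T)$ force $w^*$ to vanish on $(0,\tau^*)$, which is what guarantees $u^*\in\mathcal U_{M,\tau^*}$ and hence $\tau_{M,r}(u^*)\geq\tau(M,r)$ rather than merely producing an admissible control with an uncontrolled starting time. The strict bound $\tau(M,r)<T$, by contrast, reduces to the elementary estimate above once the unitarity of the Schr\"odinger group is used.
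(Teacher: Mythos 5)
Your proof is correct and follows essentially the same route as the paper: nonemptiness of $\mathcal V_{M,r}$ via the optimal control to $(NP)^{0,r}$ and $M\geq M(0,r)$, attainment of $\tau_{M,r}(u)$ for each admissible $u$, and existence via weak-star compactness of a maximizing sequence together with weak lower semicontinuity of the norm. The two points where you are more explicit than the paper are genuine improvements in rigor rather than deviations: the localization of the weak-star limit to $(\tau(M,r),T)$, which the paper silently builds into the notation $\chi_{(\tau(M,r),T)}\tilde u$, and the quantitative estimate $\tau(M,r)\leq T-(r_T-r)/M$ via unitarity of the Schr\"odinger group, where the paper merely asserts that $\tau(M,r)<T$ ``follows from the fact that $r<r_T$.''
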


\begin{proof}
By Lemma \ref{Np},   $(NP)^{0,r}$ has an optimal control $u^*\in\mathcal W_{0,r}$ with $\|u^*\|_{\infty}=M(0,r)\leq M$ such that $y(T;u^*,y_0)\in B(z_d,r)$. From these and the fact that $\mathcal V_{M(0,r),r}\subset \mathcal V_{M,r}$, it follows that $\mathcal V_{M,r}\neq \emptyset$.

 For each $u\in\mathcal V_{M,r}$, we claim that $\tau_{M,r}(u)$ can be reached. For the purpose, we take a sequence $\{t_n\}\subset [0,T)$, with $\displaystyle\lim_{n\rightarrow\infty}t_n=\tau_{M,r}(u)$, such that $y(T;\chi_{(t_n,T)}u,y_0)\in B(z_d,r)$.
  Clearly,
  $$\chi_{(t_n,T)}u\rightarrow \chi_{(\tau_{M,r}(u),T)}u\;\;\mbox{strongly in}\;\;L^{2}((0,T);L^2(\Omega)).
  $$
  This implies that
 $$y(T;\chi_{(t_n,T)}u,y_0)\rightarrow y(T;\chi_{(\tau_{M,r}(u),T)}u,y_0)\;\;\mbox{strongly  in}\;\;L^2(\Omega).
 $$
  This implies that $y(T;\chi_{(\tau_{M,r}(u),T)}u,y_0)$ $\in B(z_d,r)$.

 Next, by the definition of $\tau(M,r)$, there is a sequence $\{u_n\}\subset\mathcal V_{M,r}$ such that $$\tau_{M,r}(u_n)\rightarrow\tau(M,r).$$
  Since $\{u_n\}$ is bounded, we can find a subsequence $\{u_{n_k}\}\subset\{u_n\}$ and a control $\tilde u\in L^{\infty}((0,T);L^2(\Omega))$ such that
 \begin{equation*}
  \chi_{(\tau_{M,r}(u_{n_k}),T)}u_{n_k} \longrightarrow \chi_{(\tau(M,r),T)}\tilde u \mbox{ weakly star in } L^{\infty}((0,T);L^2(\Omega)).
 \end{equation*}
 Hence
 \begin{equation*}
  y(T;\chi_{(\tau_{M,r}(u_{n_k}),T)}u_{n_k},y_0) \rightarrow y(T;\chi_{(\tau(M,r),T)}\tilde u,y_0) \mbox{ weakly in } L^2(\Omega)
 \end{equation*}
 and
 \begin{equation*}
  \|\chi_{(\tau(M,r),T)}\tilde u\|_\infty \leq \displaystyle\liminf_{k\rightarrow\infty}\|\chi_{(\tau_{M,r}(u_{n_k}),T)}u_{n_k}\|\leq M.
 \end{equation*}
 So $y(T;\chi_{(\tau(M,r),T)}\tilde u,y_0)\in B(z_d,r)$ and $\tilde u\in\mathcal V_{M,r}$. Thus,  $\tilde u$ is an optimal control to $(TP)^{M,r}$.

 Finally, the inequality that
$\tau(M,r)<T$ follows from the fact that $r<r_T$.
\end{proof}

\begin{Lemma}\label{Lemma 3.8}
 Let $r\in(0,r_T)$. The map $M(\cdot,r):~ [0,T)\longrightarrow [M(0,r),\infty)$ is strictly monotonically increasing and continuous with $\displaystyle\lim_{\tau\rightarrow T} M(\tau,r)=\infty$. Moreover,
 \begin{equation}\label{3.15}
  M=M(\tau(M,r),r) \mbox{ for every } M\geq M(0,r),
 \end{equation}
 and
 \begin{equation}\label{3.16}
  \tau=\tau(M(\tau,r),r) \mbox{ for every } \tau\in[0,T).
 \end{equation}
 Consequently, the inverse of the map $M(\cdot,r)$ is the map $\tau(\cdot,r)$.
\end{Lemma}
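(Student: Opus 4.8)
The plan is to follow the template of Lemma \ref{Lemma 1.1} and Lemma \ref{Lemma 3.5}: establish strict monotonicity of $M(\cdot,r)$, then continuity, then the limit at $T^-$, then the two inverse identities (\ref{3.15})--(\ref{3.16}), and finally read off the inverse relationship. Throughout I will lean on the facts already proved: existence of optimal controls to $(NP)^{\tau,r}$ and to $(TP)^{M,r}$ (Lemmas \ref{Np} and \ref{TP}), the bang-bang property and uniqueness for $(NP)^{\tau,r}$ (Proposition \ref{OPNP}(iii)), and the properties of $r(\cdot,\tau)$ from Lemma \ref{Lemma 3.5} (strict decrease, $r=r(M(\tau,r),\tau)$, and $M(\tau,r)<M^\tau$ for $r>0$).

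First I would prove strict monotonicity. For $\tau_1<\tau_2$ the inclusion of supports $(\tau_2,T)\subset(\tau_1,T)$ shows that the optimal control of $(NP)^{\tau_2,r}$, viewed on $(0,T)$, is admissible for $(NP)^{\tau_1,r}$ with the same norm, so $M(\tau_1,r)\le M(\tau_2,r)$. If equality held, this control would be optimal for $(NP)^{\tau_1,r}$ while vanishing on the positive-measure set $(\tau_1,\tau_2)$; since $r<r_T$ gives $M(\tau_1,r)>0$ (Lemma \ref{Np}), this contradicts the bang-bang identity $\|\chi_{(\tau_1,T)}u(t)\|=M(\tau_1,r)$ a.e.\ on $(\tau_1,T)$ from Proposition \ref{OPNP}(iii). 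A useful reformulation I would record next is $\tau(M,r)=\sup\{\tau\in[0,T):M(\tau,r)\le M\}$, which follows because the infimum defining $M(\tau,r)$ is attained and $\tau_{M,r}(u)$ is itself attained (as in the proof of Lemma \ref{TP}).

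Continuity is the heart of the matter, and I expect the right-continuity (equivalently, the \emph{start later with slightly more effort} direction) to be the main obstacle, because shrinking the control interval from $(\tau_0,T)$ to $(\tau_n,T)$ with $\tau_n\downarrow\tau_0$ forces a genuine construction rather than a scaling. My plan is: for left-continuity, take $\tau_n\uparrow\tau_0$ and optimal controls $u_n$; a weak-star limit $\tilde u$ of $\chi_{(\tau_n,T)}u_n$ is supported in $(\tau_0,T)$ (test against functions supported in $(0,\tau')$ with $\tau'<\tau_0$), has norm $\le\liminf M(\tau_n,r)$, and drives $y_0$ into the weakly closed ball $B(z_d,r)$, giving $M(\tau_0,r)\le\lim M(\tau_n,r)$, which with monotonicity yields equality. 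For right-continuity, fix small $\delta>0$ with $M(\tau_0,r)+\delta<M^{\tau_0}$ (possible since $M(\tau_0,r)<M^{\tau_0}$ by Lemma \ref{Lemma 3.5}) and take the optimal control $u_\delta$ of $(OP)^{M(\tau_0,r)+\delta,\tau_0}$, whose final distance is $r(M(\tau_0,r)+\delta,\tau_0)<r(M(\tau_0,r),\tau_0)=r$ by the strict decrease of $r(\cdot,\tau_0)$ and (\ref{3.13}); restricting $u_\delta$ to $(\tau_n,T)$ keeps the final distance below $r$ by continuity of the solution map, so $M(\tau_n,r)\le M(\tau_0,r)+\delta$, and letting $n\to\infty$ then $\delta\to0$ gives right-continuity. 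For $\lim_{\tau\to T^-}M(\tau,r)=\infty$, I would argue by contradiction: a bounded sequence $M(\tau_n,r)\le C$ with $\tau_n\uparrow T$ yields controls whose effect $\|\int_{\tau_n}^T e^{-i\Delta(T-t)}\chi_\omega u_n\,dt\|\le C(T-\tau_n)\to0$, so the final states converge to $y(T;0,y_0)$, forcing $r_T\le r$, a contradiction.

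Finally I would deduce the two identities. For (\ref{3.16}), the reformulation and strict monotonicity give $\tau(M(\tau,r),r)=\sup\{\sigma:M(\sigma,r)\le M(\tau,r)\}=\sup\{\sigma:\sigma\le\tau\}=\tau$. For (\ref{3.15}), set $\tau^*=\tau(M,r)<T$; restricting to $(\tau^*,T)$ an optimal control of $(TP)^{M,r}$ shows $M(\tau^*,r)\le M$, and if $M(\tau^*,r)<M$ then right-continuity provides some $\tau>\tau^*$ with $M(\tau,r)\le M$, contradicting $\tau^*=\sup\{\tau:M(\tau,r)\le M\}$; hence $M(\tau^*,r)=M$. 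Strict monotonicity places the minimum at $\tau=0$, the limit gives unboundedness above, and (\ref{3.15}) shows every $M\ge M(0,r)$ lies in the range, so $M(\cdot,r):[0,T)\to[M(0,r),\infty)$ is a strictly increasing continuous bijection; together with (\ref{3.15})--(\ref{3.16}) this identifies $\tau(\cdot,r)$ as its inverse.
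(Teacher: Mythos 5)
Your proposal is correct, and although its skeleton (strict monotonicity via the bang-bang property, left-continuity via weak-star compactness, then the two inverse identities) matches the paper's six-step proof, the key technical step is handled by a genuinely different and shorter argument. In the paper, right-continuity is the longest step: for $\tau_n\downarrow\tau$ one extracts a weak-star limit $\tilde u$ of the optimal controls to $(NP)^{\tau_n,r}$, uses a scaling trick to show $\tilde u$ is optimal for $(OP)^{M(\tau,r)+\delta,\tau}$ with optimal distance exactly $r$, and then contradicts the strict monotonicity of $r(\cdot,\tau)$ through the identity $r=r(M(\tau,r),\tau)$. You instead argue forward: since $M(\tau,r)+\delta<M^{\tau}$ (Lemma \ref{Lemma 3.5}), the optimal control $u_\delta$ of $(OP)^{M(\tau,r)+\delta,\tau}$ reaches distance $r(M(\tau,r)+\delta,\tau)<r$ strictly, and truncating $u_\delta$ to $(\tau_n,T)$ perturbs the final state by at most $(M(\tau,r)+\delta)(\tau_n-\tau)\rightarrow 0$ (the Schr\"{o}dinger group being unitary), so $M(\tau_n,r)\leq M(\tau,r)+\delta$ for large $n$; this uses exactly the same ingredients from Lemma \ref{Lemma 3.5} — the bound $M(\tau,r)<M^\tau$, the identity (\ref{3.13}) and the strict decrease of $r(\cdot,\tau)$ — but avoids compactness entirely. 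Likewise your proof of $\lim_{\tau\rightarrow T^-}M(\tau,r)=\infty$ replaces the paper's weak-star argument by the direct strong estimate $\bigl\|\int_{\tau_n}^T e^{-i\Delta(T-t)}\chi_\omega u_n\,dt\bigr\|\leq C(T-\tau_n)$, and your identities (\ref{3.15})--(\ref{3.16}) are organized around the reformulation $\tau(M,r)=\sup\{\tau\in[0,T):M(\tau,r)\leq M\}$, which is legitimate precisely because of the attainment results in Lemmas \ref{Np} and \ref{TP}; the paper instead proves (\ref{3.15}) by an intermediate-value argument producing $\tau_1>\tau(M,r)$ with $M(\tau_1,r)=M$ and deduces (\ref{3.16}) by substitution, which is the same in substance. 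A final point in your favor: in the monotonicity step you explicitly invoke $M(\tau_1,r)>0$ (Lemma \ref{Np}, using $r<r_T$), without which a control vanishing on $(\tau_1,\tau_2)$ would not actually contradict the bang-bang property; the paper leaves this implicit.
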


\begin{proof}
We carry out the proof by several steps.

 \textit{Step 1. The map $M(\cdot,r)$ is strictly monotonically increasing.}

 Let $0\leq \tau_1 < \tau_2 < T$. We are going to show that $M(\tau_1,r)<M(\tau_2,r)$.
  Seeking for a contradiction, we could suppose that  $M(\tau_1,r)\geq M(\tau_2,r)$.
  Then the optimal control $u^*$ to $(NP)^{\tau_2,r}$ would satisfy that $$\|\chi_{(\tau_2,T)}u^*\|_{\infty}=M(\tau_2,r)\leq M(\tau_1,r).$$ Meanwhile, it is clear that  $$\chi_{(\tau_2,T)}u^*\in\mathcal W_{\tau_2,r}\subset\mathcal W_{\tau_1,r}.$$
   Now the above two results imply that $\chi_{(\tau_2,T)}u^*$ is also an optimal control to $(NP)^{\tau_1,r}$. Since  $\chi_{(\tau_2,T)}u^*(t)=0$ for a.e. $t\in (\tau_1,\tau_2)$ and because $\tau_1<\tau_2$, , we are led to a contradiction  to the bang-bang property of $(NP)^{\tau_1,r}$ (see $(iii)$ in Proposition \ref{OPNP}).

 \textit{Step 2. The map $M(\cdot,r)$ is left-continuous.}

 Let $0\leq \tau_1<\tau_2<\cdots<\tau_n<\cdots<\tau<T$ and $\displaystyle\lim_{n\rightarrow\infty} \tau_n =\tau$. It suffices to show that
 \begin{equation}\label{WANG3.19}
 \displaystyle\lim_{n\rightarrow\infty}M(\tau_n,r)=M(\tau,r).
 \end{equation}
  If (\ref{WANG3.19}) did not hold, then  by the strictly increasing monotonicity of the map $M(\cdot,r)$, we could have
  \begin{equation}\label{wang3.19}
  \displaystyle\lim_{n\rightarrow\infty}M(\tau_n,r)=M(\tau,r)-\delta\;\;
  \mbox{for some}\;\;\delta>0.
  \end{equation}
   The optimal control $u_n$ to $(NP)^{\tau_n,r}$ satisfies that
   \begin{equation}\label{wang3.20}
   \|\chi_{(\tau_n,T)}u_n\|_{\infty}=M(\tau_n,r)<M(\tau,r)\;\;\mbox{and}\;\;y(T;\chi_{(\tau_n,T)}u_n,y_0)\in B(z_d,r).
    \end{equation}
    Since $\{u_n\}$ is bounded and $\displaystyle\lim_{n\rightarrow\infty} \tau_n =\tau$, we can find a subsequence $\{u_{n_k}\}\subset\{u_n\}$ and a control $\tilde u\in L^\infty((0,T);L^2(\Omega))$ such that
   \begin{equation}\label{wang3.21}
   \chi_{(\tau_{n_k},T)}u_{n_k}\rightarrow\chi_{(\tau,T)}\tilde u\;\;\mbox{ weakly star in}\;\; L^\infty((0,T);L^2(\Omega)).
   \end{equation}
    This, along with  the first fact in (\ref{wang3.20}) and (\ref{wang3.19}), indicates  that
 \begin{equation}\label{zhangbb}
  \|\chi_{(\tau,T)}\tilde u\|_{\infty}\leq\displaystyle\liminf_{k\rightarrow\infty}\|\chi_{(\tau_{n_k},T)}u_{n_k}\|_{\infty}\leq\displaystyle\liminf_{k\rightarrow\infty} M(\tau_{n_k},r)\leq M(\tau,r)-\delta.
  \end{equation}
  Meanwhile, by (\ref{wang3.21}), one can easily show that
  $$y(T;\chi_{(\tau_{n_k},T)}u_{n_k},y_0)\rightarrow y(T;\chi_{(\tau,T)}\tilde u,y_0)\;\;\mbox{ weakly in}\;\; L^2(\Omega).
  $$
  This, together with the second fact in (\ref{wang3.20}), yields that
  \begin{equation}\label{wang3.23}
  \|y(T;\chi_{(\tau,T)}\tilde u,y_0)-z_d\|\leq\displaystyle\liminf_{k\rightarrow\infty} \|y(T;\chi_{(\tau_{n_k},T)}u_{n_k},y_0)-z_d\| \leq r.
 \end{equation}
 From (\ref{wang3.23}), we see that $\tilde u\in\mathcal W_{\tau,r}$. Then, by the optimality of $M(\tau,r)$, we must have that
 $$\|\chi_{(\tau,T)}\tilde u\|_\infty\geq M(\tau,r),
 $$
 which contradicts to
  (\ref{zhangbb}). Hence, (\ref{WANG3.19}) holds.

 \textit{Step 3. The map $M(\cdot,r)$ is right-continuous.}

 Let $0\leq\tau<\cdots<\tau_n<\cdots<\tau_2<\tau_1<T$ and $\displaystyle\lim_{n\rightarrow\infty} \tau_n =\tau$.
 It suffices to show that
 \begin{equation}\label{wang3.25}
 \displaystyle\lim_{n\rightarrow\infty}M(\tau_n,r)=M(\tau,r).
  \end{equation}
   Seeking for a contradiction, we suppose that (\ref{wang3.25}) did not hold. Then by the strictly increasing monotonically of the map $M(\cdot,r)$, we would have that
   \begin{equation}\label{wang3.26}
   \displaystyle\lim_{n\rightarrow\infty}M(\tau_n,r)=M(\tau,r)+\delta\;\;\mbox{ for some}\;\;\delta>0.
    \end{equation}
  Let $u_n$ and $y_n$ be the optimal control and the optimal state to $(NP)^{\tau_n,r}$ with $n\in\mathbb N^+$. Then by Proposition \ref{OPNP} and the optimality of $(OP)^{M(\tau_n,r),\tau_n}$,  we see that for all $u\in\mathcal U_{M(\tau_n,r),\tau_n}$,
 \begin{equation}\label{3.17}
  0<r=r(M(\tau_n,r),\tau_n)=\|y(T;\chi_{(\tau_n,T)}u_n,y_0)-z_d\| \leq \|y(T;\chi_{(\tau_n,T)}u,y_0)-z_d\|.
 \end{equation}
 Because $\|\chi_{(\tau_n,T)}u_n\|_\infty=M(\tau_n,r)\leq M(\tau_1,r)$,  we can find a subsequence $\{u_{n_k}\}\subset\{u_n\}$ and a control  $\tilde u\in L^\infty((0,T);L^2(\Omega))$ such that
 \begin{equation*}
  \chi_{(\tau_{n_k},T)}u_{n_k} \longrightarrow \tilde u \mbox{ weakly star in } L^\infty((0,T);L^2(\Omega)).
 \end{equation*}
 This implies that
 \begin{equation}\label{3.18}
  y_{n_k}(T)\rightarrow y(T;\chi_{(\tau,T)}\tilde u,y_0) \mbox{ weakly in }L^2(\Omega)
 \end{equation}
 and
 \begin{equation}\label{3.19}
  \|\chi_{(\tau,T)}\tilde u\|_\infty \leq \displaystyle\liminf_{k\rightarrow\infty} \|\chi_{(\tau_{n_k},T)}u_{n_k}\|_\infty \leq \displaystyle\liminf_{k\rightarrow\infty} M(\tau_{n_k},r)=M(\tau,r)+\delta.
 \end{equation}
 Here, we used (\ref{wang3.26}).
 Meanwhile,  it is clear that
 \begin{equation}\label{wang3.30}
 \frac{M(\tau_{n_k},r)}{M(\tau,r)+\delta}\chi_{(\tau_{n_k},T)}v\in\mathcal U_{M(\tau_{n_k},r),\tau_{n_k}}\;\;\mbox{for each}\;\; v\in\mathcal U_{M(\tau,r)+\delta,\tau}.
 \end{equation}
It follows from (\ref{wang3.30}) and (\ref{3.17}) that
 \begin{equation*}
  0< ~r ~\leq ~\Big\|y(T;\frac{M(\tau_{n_k},r)}{M(\tau,r)+\delta}\chi_{(\tau_{n_k},T)}v,y_0)-z_d\Big\|\;\;\mbox{for all}\;\; v\in\mathcal U_{M(\tau,r)+\delta,\tau}\;\;\mbox{and all}\;\; k\in \mathbb{N}.
 \end{equation*}
Sending  $k\rightarrow\infty$ in the above, we get
 \begin{equation}\label{3.20}
  r \leq \|y(T;\chi_{(\tau,T)}v,y_0)-z_d\| \mbox{ for all } v\in\mathcal U_{M(\tau,r)+\delta,\tau}.
 \end{equation}

On the other hand, it follows from  (\ref{3.18}) that
 \begin{equation*}
  \|y(T;\chi_{(\tau,T)}\tilde u,y_0)-z_d\| \leq \displaystyle\liminf_{k\rightarrow\infty}\|y_{n_k}(T)-z_d\|=r>0.
 \end{equation*}
 This, along with (\ref{3.19}) and (\ref{3.20}), indicates that
 \begin{equation}\label{3.21}
  0<r=\|y(T;\chi_{(\tau,T)}\tilde u,y_0)-z_d\| \leq \|y(T;\chi_{(\tau,T)}v,y_0)-z_d\|  \mbox{ for all } v\in\mathcal U_{M(\tau,r)+\delta,\tau}.
 \end{equation}

 By  (\ref{3.21}) and (\ref{3.19}), we see that $\tilde u$ is an optimal control to $(OP)^{M(\tau,r)+\delta,\tau}$ and
 \begin{equation}\label{wang3.33}
 r(M(\tau,r)+\delta,\tau)=r >0.
 \end{equation}
Clearly, it holds that
\begin{equation}\label{wang3.34}
M(\tau,r)+\delta < M^\tau,
\end{equation}
 for otherwise we could have that  $r(M(\tau,r)+\delta,\tau)=0$, which contradicts to (\ref{wang3.33}).

 Now, (\ref{wang3.34}) and the facts  that $r(M(\tau,r)+\delta,\tau)=r$ and $r(M(\tau,r),\tau)=r$ (see (\ref{3.13})), leads to a contradiction to  the strictly monotonicity of $r(\cdot,\tau)$.
 Hence, (\ref{wang3.25}) stands.

 \textit{Step 4. $\displaystyle\lim_{\tau\rightarrow T} M(\tau,r)=\infty$, when $r\in(0,r_T)$.}

 If it didn't hold, we could have that $\displaystyle\lim_{\tau\rightarrow T} M(\tau,r) \leq N_0$ for some $N_0 > 0$. We set $\{\tau_n\}\subset [0,T)$ and $\displaystyle\lim_{n\rightarrow\infty} \tau_n =T$. Let $u_n$ be the optimal control to $(NP)^{\tau_n,r}$ for $n\in\mathbb N^+$. Since $\{u_n\}$ is bounded in $L^\infty((0,T);L^2(\Omega))$,
 there is a subsequence of $\{u_n\}$, still denoted in the same way, such that
 \begin{equation*}
 \chi_{(\tau_n,T)}u_n\rightarrow 0\;\;\mbox{ weakly star in}\;\;L^\infty((0,T);L^2(\Omega)),
 \end{equation*}
  from which, it follows that
  $$y(T;\chi_{(\tau_n,T)}u_n,y_0)\rightarrow y(T;0,y_0)\;\;\mbox{ weakly in}\;\; L^2(\Omega). $$
  Hence, it holds that
 \begin{equation*}
  r_T=\|y(T;0,y_0)-z_d\| \leq \displaystyle\liminf_{n\rightarrow\infty} \|y(T;\chi_{(\tau_n,T)}u_n,y_0)-z_d\| \leq r.
 \end{equation*}
 This contradicts to the fact that $r < r_T$.

 \textit{Step 5. $M=M(\tau(M,r),r)$ for every $M\geq M(0,r)$.}

 Let $M\geq M(0,r)$. According to Lemma \ref{TP}, there is an optimal control $u_1\in\mathcal V_{M,r}$ to $(TP)^{M,r}$ such that
 \begin{equation*}
  y(T;\chi_{(\tau(M,r),T)}u_1,y_0) \in B(z_d,r) \mbox{ and } \|\chi_{(\tau(M,r),T)}u_1\|_\infty \leq M.
 \end{equation*}
 Hence, by the optimality of $M(\tau(M,r),r)$, we have $M(\tau(M,r),r)\leq M$. We claim that $M(\tau(M,r),r)=M$. Otherwise we could suppose that $M(\tau(M,r),r) < M$. Then according to the strictly increasing monotonicity and continuity of the map $M(\cdot,r)$, we can find a number $\tau_1\in (\tau(M,r),T)$ such that $M(\tau_1,r)=M$. Clearly, the optimal control  $u_2\in\mathcal W_{\tau_1,r}$  to $(NP)^{\tau_1,r}$ satisfies that
 \begin{equation*}
  y(T;\chi_{(\tau_1,T)}u_2,y_0)\in B(z_d,r) \mbox{ and } \|\chi_{(\tau_1,T)}u_2\|_\infty = M(\tau_1,r)=M.
 \end{equation*}
 Thus, it holds that  $u_2\in\mathcal V_{M,r}$. This, combined with the fact that $\tau_1 > \tau(M,r)$, contradicts to the optimality of $\tau(M,r)$.

\textit{ Step 6. $\tau=\tau(M(\tau,r),r)$ for every $\tau\in [0,T)$.}

 According to the increasing monotonicity of the map $M(\cdot,r)$ and Lemma \ref{TP}, we know that $M(\tau,r)\geq M(0,r)$ and $\tau(M(\tau,r),r)< T$.
 Since
 $$
 M=M(\tau(M,r),r),\;\;\mbox{when}\;\; M\geq M(0,r) \; (\mbox{see Step 5}),
 $$
 by taking
   $M=M(\tau,r)$ in the above, we obtain that $M(\tau,r)=M(\tau(M(\tau,r),r),r)$. From this, the strictly monotonicity of the map $M(\cdot,r)$ and the fact that $\tau(M(\tau,r),r)< T$, we have $\tau=\tau(M(\tau,r),r)$. This completes the proof.

\end{proof}

\begin{Remark}\label{Rk 3.9}
 $(i)$ Paying attention to Lemma \ref{TP} where $r\in(0,r_T)$, we see that $M\geq M(0,r)$  if an only if  $(TP)^{M,r}$ has optimal controls. The sufficiency has been proved in Lemma \ref{TP}. Now we show the necessity. Let $u^*$ be an optimal control to  $(TP)^{M,r}$. Then it holds that
 \begin{equation*}
  y(T;\chi_{(\tau(M,r),T)}u^*,y_0)\in B(z_d,r) \mbox{ and } \|\chi_{(\tau(M,r),T)}u^*\|_\infty \leq M.
 \end{equation*}
 From these and the optimality of $M(\tau(M,r),r)$, we have $M(\tau(M,r),r)\leq M$. This, together with that $M(\tau,r)$ is non-decreasing, indicates that $M(0,r)\leq M(\tau(M,r),r)\leq M$.

  $(ii)$ About the function $M^\tau$ (see (\ref{bb})), we have that $M<M^{\tau(M,r)}$, when $r\in (0,r_T)$.
  In fact, according to Lemma \ref{Lemma 3.8} and the fact that $M\geq M(0,r)$, one has that $M=M(\tau_0,r)$ for some $\tau_0\in[0,T)$. By this and Lemma \ref{Np},  we can find an optimal control $u^*$ to $(NP)^{\tau_0,r}$ with $\|\chi_{(\tau_0,T)}u^*\|_\infty=M(\tau_0,r)=M$ such that $y(T;\chi_{(\tau_0,T)}u^*,y_0)\in B(z_d,r)$. Then we have $u^*\in\mathcal V_{M,r}$ and hence $\tau_0 \leq \tau(M,r)$. By Lemma \ref{Lemma 3.5} and Lemma \ref{Lemma Mtau}, we conclude that $M=M(\tau_0,r) < M^{\tau_0} \leq M^{\tau(M,r)}$.

 $(iii)$ Let $A=\{(M,r); M >0,r\in[r(M,0),r_T)\cap(0,r_T)\}$ and $B=\{(M,r); M\geq M(0,r),r\in(0,r_T)\}$. Then it holds  that $A=B$. Indeed, if $(M,r)\in A$, then by the fact that $r\geq r(M,0)$ and the fact that $M(\tau,\cdot)$ is non-increasing, we see that $M(0,r)\leq M(0,r(M,0))$. Using $(i)$ in Remark \ref{Rk maximal}, we can find an optimal control $u^*$ to $(OP)^{M,0}$ such that $\|y(T;u^*,y_0)-z_d\|=r(M,0)$ and $\|u^*\|_\infty \leq M.$ From these and the optimality of $M(0,r(M,0))$, we see that $M(0,r(M,0))\leq M$. This, combined with the fact that $M(0,r)\leq M(0,r(M,0))$, indicates that $M\geq M(0,r)$. Hence $A\subset B$. Conversely, if $(M,r)\in B$, then by the fact that that $r\in(0,r_T)$,  Lemma \ref{Np} and (\ref{3.13}), it holds that $M(0,r)>0$ and $r=r(M(0,r),0)$. These, together with the facts that $M\geq M(0,r)$ and $r(\cdot,\tau)$ is non-increasing, indicate that $M\geq M(0,r)>0$ and that $r=r(M(0,r),0)\geq r(M,0)$. From these, it follows that  $B\subset A$.

\end{Remark}

\begin{Proposition}\label{TPNP}
 $(i)$ When $\tau\in[0,T)$ and $r\in(0,r_T)$, the optimal control to $(NP)^{\tau,r}$ is also an optimal control to $(TP)^{M(\tau,r),r}$;
 $(ii)$ When $(M,r)\in \{(M,r); M >0,r\in[r(M,0),r_T)\cap(0,r_T)\}$ or $(M,r)\in\{(M,r); M\geq M(0,r),r\in(0,r_T)\}$, the optimal control to $(TP)^{M,r}$ is also an optimal control to $(NP)^{\tau(M,r),r}$. Furthermore, $(TP)^{M,r}$ holds bang-bang property and has the unique optimal control.
\end{Proposition}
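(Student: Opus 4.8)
The plan is to read off Proposition~\ref{TPNP} from the two inversion relations already in hand, namely the identities (\ref{3.15})--(\ref{3.16}) of Lemma~\ref{Lemma 3.8}, combined with the $(OP)$--$(NP)$ equivalence of Proposition~\ref{OPNP}. No new compactness or variational estimate is required; the whole argument is a matter of chasing the mutually inverse monotone maps $M(\cdot,r)$ and $\tau(\cdot,r)$.

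For part $(i)$, I fix $\tau\in[0,T)$ and $r\in(0,r_T)$ and set $M=M(\tau,r)$. Since $M(\cdot,r)$ is increasing by Lemma~\ref{Lemma 3.8}, one has $M\geq M(0,r)$, so by Lemma~\ref{TP} the problem $(TP)^{M,r}$ admits optimal controls and $\tau(M,r)<T$. Let $u^*$ be optimal for $(NP)^{\tau,r}$; then $\|\chi_{(\tau,T)}u^*\|_\infty=M(\tau,r)=M$ and $y(T;\chi_{(\tau,T)}u^*,y_0)\in B(z_d,r)$, whence $u^*\in\mathcal V_{M,r}$ (with witness $\tau$) and $\tau\leq\tau_{M,r}(u^*)\leq\tau(M,r)$. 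But (\ref{3.16}) gives $\tau(M,r)=\tau(M(\tau,r),r)=\tau$, so $\tau_{M,r}(u^*)=\tau(M,r)$; since the supremum defining $\tau_{M,r}(u^*)$ is attained (Lemma~\ref{TP}), we conclude $y(T;\chi_{(\tau(M,r),T)}u^*,y_0)\in B(z_d,r)$, which is exactly optimality of $u^*$ for $(TP)^{M(\tau,r),r}$.

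For part $(ii)$, Remark~\ref{Rk 3.9}$(iii)$ lets me use the index set $\{(M,r);~M\geq M(0,r),~r\in(0,r_T)\}$. Fix such $(M,r)$ and write $\tau^*=\tau(M,r)$; by Lemma~\ref{TP} we have $\tau^*<T$, so $(NP)^{\tau^*,r}$ is well posed. If $u^*$ is optimal for $(TP)^{M,r}$, then $\chi_{(\tau^*,T)}u^*\in\mathcal W_{\tau^*,r}$ and $\|\chi_{(\tau^*,T)}u^*\|_\infty\leq M$. Because $M\geq M(0,r)$, identity (\ref{3.15}) gives $M(\tau^*,r)=M(\tau(M,r),r)=M$, and hence
$$M=M(\tau^*,r)\leq\|\chi_{(\tau^*,T)}u^*\|_\infty\leq M.$$
Thus $\|\chi_{(\tau^*,T)}u^*\|_\infty=M(\tau^*,r)$, i.e.\ $u^*$ is optimal for $(NP)^{\tau^*,r}=(NP)^{\tau(M,r),r}$. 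Since $r\in(0,r_T]$, Proposition~\ref{OPNP}$(iii)$ asserts that $(NP)^{\tau^*,r}$ has the bang-bang property and a unique optimal control on $(\tau^*,T)$; as every optimal control of $(TP)^{M,r}$ shares the same optimal time $\tau^*$ and restricts to an optimal control of $(NP)^{\tau^*,r}$, both properties transfer to $(TP)^{M,r}$.

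The step demanding the most care is checking that the index constraints are exactly those under which (\ref{3.15})--(\ref{3.16}) and Proposition~\ref{OPNP}$(iii)$ apply: in $(i)$ that $M=M(\tau,r)$ lies in $[M(0,r),\infty)$, where $\tau(\cdot,r)$ inverts $M(\cdot,r)$; in $(ii)$ that $\tau^*=\tau(M,r)<T$ so that $(NP)^{\tau^*,r}$ makes sense, and that $r$ remains in $(0,r_T]$. These bookkeeping points are precisely where Lemma~\ref{TP}, Lemma~\ref{Lemma 3.8} and Remark~\ref{Rk 3.9} enter; once they are settled the norm and time equalities are immediate.
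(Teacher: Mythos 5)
Your proposal is correct and follows essentially the same route as the paper: part $(i)$ is read off from the identity (\ref{3.16}), part $(ii)$ from Remark \ref{Rk 3.9}$(iii)$ together with (\ref{3.15}), and the bang-bang property and uniqueness of $(TP)^{M,r}$ are transferred from $(NP)^{\tau(M,r),r}$ via Proposition \ref{OPNP}$(iii)$. The only difference is that you spell out the bookkeeping (membership in $\mathcal V_{M,r}$, attainment of $\tau_{M,r}(u^*)$, the inequality chain forcing $\|\chi_{(\tau(M,r),T)}u^*\|_\infty=M(\tau(M,r),r)$) which the paper leaves implicit.
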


\begin{proof}
 $(i)$ The optimal control $u_1$ to $(NP)^{\tau,r}$ satisfies that $y(T;\chi_{(\tau,T)}u_1,y_0)\in B(z_d,r)$ and that $\|\chi_{(\tau,T)}u_1\|_\infty = M(\tau,r)$. From these and (\ref{3.16}), we can deduce that $u_1$ is also an optimal control to $(TP)^{M(\tau,r),r}$;
 $(ii)$ According to $(iii)$ in Remark \ref{Rk 3.9}, an optimal control $u_2$ to $(TP)^{M,r}$ verifies that $y(T;\chi_{(\tau(M,r),T)}u_2,y_0)\in B(z_d,r)$ and that $\|\chi_{(\tau(M,r),T)}u_2\|_\infty \leq M$. By these and (\ref{3.15}), we see that $u_2$ is also an optimal control to $(NP)^{\tau(M,r),r}$. Finally, from $(ii)$ and Proposition \ref{OPNP}, one can easily obtain the bang-bang property and the uniqueness for $(TP)^{M,r}$.
\end{proof}

\subsection{Equivalence between three optimal control problems}

\begin{proof}[Proof of Theorem \ref{Theorem 1.3}]
 We start with introducing two notations: Write $P_1=P_2$ for the statement that problems $P_1$ and $P_2$ have the  same optimal control; while $P_1\Rightarrow P_2$ for the statement that the optimal control to $P_1$ is also an optimal control to $P_2$. Hence,  $P_1=P_2$ if and only if $P_1\Rightarrow P_2$ and $P_2\Rightarrow P_1$.

We now organize the proof by three steps.

 \textit{Step 1. When $\tau\in[0,T)$ and $0< M <M^\tau$, $(OP)^{M,\tau} = (TP)^{M,r(M,\tau)} = (NP)^{\tau,r(M,\tau)}$.}

 By the definition of $\mathcal U_{M,0}$, every control $u\in\mathcal U_{M,\tau}$ satisfies that $\chi_{(\tau,T)}u\in\mathcal U_{M,0}$. Then we easily see that $r(M,\tau)\geq r(M,0)$. This, combined with the assumption, $(ii)$ in Lemma \ref{Lemma OP} and Lemma \ref{Lemma 3.5}, implies that
 \begin{equation}\label{1}
  \tau\in[0,T), ~ M\in(0,M^\tau), ~ r(M,\tau)\in[r(M,0),r_T)\cap (0,r_T)
 \end{equation}
 and
 \begin{equation}\label{2}
  M=M(\tau,r(M,\tau)) , ~ \tau=\tau(M(\tau,r(M,\tau)),r(M,\tau))=\tau(M,r(M,\tau)).
 \end{equation}
 Using Proposition \ref{OPNP} and (\ref{1}), we have
  $$
  (OP)^{M,\tau}\Rightarrow (NP)^{\tau,r(M,\tau)}\;\;\mbox{ and}\;\;(NP)^{\tau,r(M,\tau)}\Rightarrow (OP)^{M(\tau,r(M,\tau)),\tau}.
   $$
   Then it follows from (\ref{2}) that $(OP)^{M,\tau} = (NP)^{\tau,r(M,\tau)}$. On the other hand, according to Proposition \ref{TPNP}, (\ref{1}) and (\ref{2}), we obtain that
 $$(NP)^{\tau,r(M,\tau)}\Rightarrow (TP)^{M(\tau,r(M,\tau)),r(M,\tau)}$$
  and
  $$
  (TP)^{M(\tau,r(M,\tau)),r(M,\tau)}\Rightarrow (NP)^{\tau(M(\tau,r(M,\tau)),r(M,\tau)),r(M,\tau)}.
  $$
  These, together with (\ref{2}) again, indicate that $(NP)^{\tau,r(M,\tau)}=(TP)^{M,r(M,\tau)}$.

 \textit{Step 2. When $\tau\in[0,T)$ and $r\in(0,r_T)$,  $(NP)^{\tau,r} = (OP)^{M(\tau,r),\tau} = (TP)^{M(\tau,r),r}$.}

 According to the assumption, Lemma \ref{Lemma 3.8}, Lemma \ref{Np} and Lemma \ref{Lemma 3.5}, we can conclude that
 \begin{equation}\label{3}
  \tau\in[0,T), ~ r\in (0,r_T), ~ M(\tau,r)\in[M(0,r),M^\tau)\cap (0,M^\tau)
 \end{equation}
 and
 \begin{equation}\label{4}
  r=r(M(\tau,r),\tau), ~ \tau=\tau(M(\tau,r),r).
 \end{equation}
By Proposition \ref{OPNP} and (\ref{3}), we have that
 $$(NP)^{\tau,r}\Rightarrow (OP)^{M(\tau,r),\tau}\;\;\mbox{ and}\;\;(OP)^{M(\tau,r),\tau}\Rightarrow (NP)^{\tau,r(M(\tau,r),\tau)}.$$
  From these and (\ref{4}) one can deduce that $(NP)^{\tau,r}=(OP)^{M(\tau,r),\tau}$. On the other hand, by Proposition \ref{TPNP} and (\ref{3}), it holds that
  $$
  (NP)^{\tau,r}\Rightarrow (TP)^{M(\tau,r),r}\;\;\mbox{ and}\;\;(TP)^{M(\tau,r),r}\Rightarrow (NP)^{\tau(M(\tau,r),r),r}.
   $$
   These, along with (\ref{4}), lead to that $(NP)^{\tau,r} =(TP)^{M(\tau,r),r}$.

 \textit{Step 3. When $M >0$ and $r\in [r(M,0),r_T)\cap(0,r_T)$,  $(TP)^{M,r} = (NP)^{\tau{(M,r)},r} = (OP)^{M,\tau{(M,r)}}$.}

 From the assumption, Lemma \ref{TP} and $(ii)$ in Remark \ref{Rk 3.9}, we  have
 \begin{equation}\label{5}
  \tau(M,r)\in[0,T), ~ r\in [r(M,0),r_T)\cap(0,r_T), ~ 0<M<M^{\tau(M,r)}
 \end{equation}
 and
 \begin{equation}\label{6}
  M=M(\tau(M,r),r), ~ r=r(M(\tau(M,r),r),\tau(M,r))=r(M,\tau(M,r)).
 \end{equation}
 By Proposition \ref{TPNP} and (\ref{5}), we obtain that
  $$(TP)^{M,r}\Rightarrow (NP)^{\tau{(M,r)},r}\;\;\mbox{ and}\;\;(NP)^{\tau{(M,r)},r}\Rightarrow (TP)^{M(\tau(M,r),r),r}.
   $$
   From these and (\ref{6}), it holds that $(TP)^{M,r}=(NP)^{\tau{(M,r)},r}$. On the other hand, according to Proposition \ref{OPNP}, (\ref{5}) and (\ref{6}), we deduce that $$(NP)^{\tau{(M,r)},r}\Rightarrow (OP)^{M(\tau(M,r),r),\tau{(M,r)}}
   $$
    and
    $$
    (OP)^{M(\tau(M,r),r),\tau{(M,r)}}\Rightarrow (NP)^{\tau{(M,r)},r(M(\tau(M,r),r),\tau(M,r))}.$$
    From these and (\ref{6}), one can obtain that $(NP)^{\tau{(M,r)},r} = (OP)^{M,\tau{(M,r)}}$.
\end{proof}

\noindent\textbf{Acknowledgement.} \;The author appreciates Professor Gengsheng Wang for his valuable help in this paper.

\bigskip

\end{document}